\documentclass[11pt, twoside]{article}
\usepackage{cite}
\usepackage{amssymb}
\usepackage{mathrsfs}
\usepackage{amsmath}
\usepackage{amsthm}
\usepackage{amsfonts}
\usepackage{latexsym}
\usepackage{indentfirst}
\usepackage{color}
\usepackage{enumerate}
\usepackage[english]{babel}
\usepackage[colorlinks=true,
linkcolor=blue,
citecolor=red,
urlcolor=magenta, backref=page]{hyperref}

\usepackage{txfonts}
\usepackage{anysize}

\allowdisplaybreaks

\newtheorem*{maintheorem}{Question}

\newtheorem{theorem}{Theorem}[section]
\newtheorem{lemma}[theorem]{Lemma}
\newtheorem{corollary}[theorem]{Corollary}
\newtheorem{proposition}[theorem]{Proposition}

\theoremstyle{definition}

\numberwithin{equation}{section}

\begin{document}

\title{\bf\Large Sharp Matrix $\mathcal A_p$ lower bounds for
general non-degenerate convolution Calder\'on--Zygmund operators
\footnotetext{\hspace{-0.35cm} 2020 {\it Mathematics Subject Classification}.
Primary 42B20; Secondary 42B35, 47A30, 47A56.\endgraf
{\it Key words and phrases.}
matrix weight,
quantitative bound,
Calder\'on--Zygmund operator,
Cantor set.\endgraf
This project is partially supported by
the National Natural Science Foundation of China
(Grant Nos. 12431006 and 12371093),
the Beijing Natural Science Foundation (Grant No. 1262011), and
the Fundamental Research Funds for the Central Universities (Grant No. 2253200028).}}
\date{}
\author{Fan Bu, Dachun Yang\footnote{Corresponding author, E-mail:
\texttt{dcyang@bnu.edu.cn}/{\color{red} \today}/Newest version.}
\ and Wen Yuan}

\maketitle

\vspace{-0.8cm}

\begin{center}
\begin{minipage}{13cm}
{\small {\bf Abstract}\quad
Let $T$ be a convolution Calder\'on--Zygmund operator satisfying
Hyt\"onen's non-degeneracy condition.
We prove that, for every $p\in(1,\infty)$ and every matrix dimension $m\ge3$,
there exists a positive constant $C$ such that, for any $t\in[1,\infty)$,
\begin{equation*}
\sup_{[W]_{\mathcal A_p}\le t}
\|T\|_{L^p(W)\to L^p(W)}
\ge C t^{1+\frac{1}{p(p-1)}}.
\end{equation*}
If, in addition, the odd part of the kernel is non-degenerate
(as for the Hilbert transform and the Riesz transforms),
the same conclusion holds for every $m\ge2$.
}
\end{minipage}
\end{center}




\section{Introduction}

Throughout this article, we work in $\mathbb R^n$ and,
unless necessary, we will not explicitly specify this underlying space.

Weighted norm inequalities play a fundamental role in harmonic analysis
and provide essential tools for the study of singular integrals,
function spaces, and partial differential equations.
Let $p\in(1,\infty)$.
In 1972, Muckenhoupt \cite{Muckenhoupt} introduced the class of $A_p$ weights, defined by
\begin{equation}\label{eq:scalar-Ap}
[w]_{A_p}:=
\sup_{\mathrm{cube}\,Q\subset\mathbb R^n}
\fint_Q w
\left(\fint_Q w^{-\frac{1}{p-1}}\right)^{p-1}
<\infty,
\end{equation}
and proved that the $A_p$ condition characterizes
the boundedness of the Hardy--Littlewood maximal operator on $L^p(w)$.
Subsequently, Hunt, Muckenhoupt and Wheeden \cite{HMW} proved that
the Hilbert transform $H$ is bounded on $L^p(w)$ if and only if $w\in A_p$.
Once the qualitative theory had been established,
attention turned to the sharp dependence of operator norms
on the $A_p$ characteristic.
In 1993, Buckley \cite{Buckley} proved the sharp bound
$\|M\|_{L^p(w)\to L^p(w)}\lesssim [w]_{A_p}^{\frac1{p-1}}$
for the Hardy--Littlewood maximal operator.
For Calder\'on--Zygmund operators, determining the optimal dependence of
the operator norm on the $A_p$ characteristic became a central problem,
motivated in part by its connection with regularity questions
for the Beltrami equation; see \cite{AIS}.
This problem is considerably more subtle.
Petermichl \cite{PetH,PetR} established the sharp weighted bounds
for the Hilbert transform and the Riesz transforms.
Subsequently, the quantitative extrapolation theorem of Dragi\v{c}evi\'c,
Grafakos, Pereyra and Petermichl \cite{DGPP} reduced the upper-bound problem
for general $p\in(1,\infty)$ to $p=2$.
Finally, Hyt\"onen \cite{HytonenA2} proved the $A_2$ theorem
for general Calder\'on--Zygmund operators, yielding
\begin{equation*}
\|T\|_{L^p(w)\to L^p(w)}
\lesssim [w]_{A_p}^{\max\{1,\frac1{p-1}\}}.
\end{equation*}

To address problems arising in multivariate stationary processes
and Toeplitz operators, Treil and Volberg \cite{TV} introduced
matrix $\mathcal A_2$ weights and proved that
the matrix $\mathcal A_2$ condition characterizes
the boundedness of the Hilbert transform on matrix-weighted $L^2$ spaces.
Their result was later extended independently to
the full range $p\in(1,\infty)$ by Nazarov and Treil \cite{NT}
and by Volberg \cite{VolbergAp}, using different methods.
Christ and Goldberg \cite{ChristGoldberg} subsequently
established the $L^2$ boundedness of the matrix-weighted maximal operator,
while Goldberg \cite{Goldberg} extended this boundedness
to the full range $p\in(1,\infty)$ and, as an application,
proved the boundedness of convolution singular integral operators
on matrix-weighted $L^p$ spaces.

A matrix weight is a measurable, locally integrable function
$W$ taking values in the positive definite $m\times m$ matrices almost everywhere.
For a measurable vector-valued function
$\vec f:\ \mathbb R^n\to\mathbb C^m$, define
\begin{equation*}
\|\vec f\|_{L^p(W)}
:=\left[\int_{\mathbb R^n}\left|W^{\frac1p}(x)\vec f(x)\right|^p\,dx\right]^{\frac1p}.
\end{equation*}
Throughout this article, we use the following
matrix $\mathcal A_p$ condition introduced by Roudenko \cite{Roudenko}:
\begin{equation}\label{eq:Ap}
[W]_{\mathcal A_p}:=
\sup_{\mathrm{cube}\,Q\subset\mathbb R^n}
\fint_Q \left[ \fint_Q \left\|W^{\frac1p}(x)W^{-\frac1p}(y)\right\|^{p'}\,dy
\right]^{\frac{p}{p'}}\,dx <\infty.
\end{equation}
Here $\fint_Qh=|Q|^{-1}\int_Qh$ and $p':=\frac p{p-1}$.
For $m=1$, \eqref{eq:Ap} reduces exactly to \eqref{eq:scalar-Ap}.

On the quantitative upper-bound side,
an important starting point was the work of Bickel et al. \cite{BPW},
who obtained quantitative estimates for the Hilbert transform.
Subsequently, Nazarov et al. \cite{NPTV} introduced
convex body domination and proved that,
for general Calder\'on--Zygmund operators,
\begin{equation*}
\|T\|_{L^2(W)\to L^2(W)}\lesssim [W]_{\mathcal A_2}^{\frac32},
\end{equation*}
thereby improving the earlier estimate in \cite{BPW}.
Around the same time, Culiuc et al. \cite[Corollary 5]{CDO}
obtained the same $[W]_{\mathcal A_2}^{\frac32}$ bound
through a related sparse domination approach based on
dyadic shifts and Hyt\"onen's representation theorem.
Subsequently, Cruz-Uribe et al. \cite[Corollary 1.16]{CIM}
extended these quantitative estimates to the full range $p\in(1,\infty)$:
\begin{equation}\label{eq:cim-upper}
\|T\|_{L^p(W)\to L^p(W)}
\lesssim [W]_{\mathcal A_p}^{1+\frac1{p(p-1)}}.
\end{equation}
For more results on quantitative bounds in the matrix-weighted setting,
see \cite{DHL20,LLOR24,LLOR242,hpv2019,DLY,Vuorinen}.

More recently, Domelevo et al. \cite[Theorem 1.1]{DPTV} obtained a surprising lower bound:
\begin{equation*}
\sup_{[W]_{\mathcal A_2}\le t} \|H\|_{L^2(W)\to L^2(W)}
\gtrsim t^{\frac32}.
\end{equation*}
Subsequently, Jiao et al. \cite[Theorem 1.1]{Xie} extended this result to
the full range $p\in(1,\infty)$:
\begin{equation*}
\sup_{[W]_{\mathcal A_p}\le t} \|H\|_{L^p(W)\to L^p(W)}
\gtrsim t^{1+\frac1{p(p-1)}}.
\end{equation*}
Together with the upper bound \eqref{eq:cim-upper},
this establishes the sharp dependence on
the matrix $\mathcal A_p$ characteristic for the Hilbert transform
throughout the full range $p\in(1,\infty)$.
This naturally leads to the following question:

\begin{maintheorem} \rm
Is the exponent in \eqref{eq:cim-upper} sharp
for all non-degenerate convolution Calder\'on--Zygmund operators?
\end{maintheorem}

The present article answers this question through a direct continuous
construction. We now state the assumptions on the kernel and the main
result.
Throughout, $T$ is a convolution operator bounded on
$L^2$ whose off-diagonal kernel
$K:\ \mathbb R^n\setminus\{\mathbf 0\}\to\mathbb C$ satisfies

\begin{enumerate}[{\rm(i)}]
\item the \emph{size condition}:
there exists a positive constant $C_K$ such that,
for any $x\in\mathbb R^n\setminus\{\mathbf 0\}$,
\begin{equation}\label{eq:size}
|K(x)|\le C_K |x|^{-n};
\end{equation}

\item the \emph{H\"older regularity condition}:
there exist a positive constant $H_K$ and $\alpha\in(0,1]$ such that,
for any $z\in\mathbb R^n\setminus\{0\}$ and
any $h\in\mathbb R^n$ with $|h|\le\frac{|z|}{2}$,
\begin{equation}\label{eq:holder}
|K(z-h)-K(z)|\le H_K\,\frac{|h|^\alpha}{|z|^{n+\alpha}};
\end{equation}

\item the \emph{non-degeneracy condition}:
there exists a positive constant $a$ such that,
for any $r\in(0,\infty)$,
\begin{equation}\label{eq:nd}
|K(x)|\ge a r^{-n}
\quad\text{ for some } x\in \mathbb R^n\setminus B(\mathbf 0,r).
\end{equation}
\end{enumerate}

Conditions \eqref{eq:size} and \eqref{eq:holder} are the convolution form
of the standard Calder\'on--Zygmund kernel conditions.
Condition \eqref{eq:nd} is exactly the convolution version of
Hyt\"onen's non-degeneracy condition (see \cite[Remark 2.1.2]{Hytonen}).
The following theorem is the main result of this article.

\begin{theorem}\label{thm:main}
Let $p\in(1,\infty)$.
Then, for every integer $m\ge3$, there exists a constant $C>0$
such that, for all $t\ge 1$,
\begin{equation}\label{eq:main-lower}
\sup_{[W]_{\mathcal A_p}\le t}
\left\| T\right\|_{L^p(W)\to L^p(W)}
\ge C t^{1+\frac{1}{p(p-1)}}.
\end{equation}
If we further assume that the odd part
\begin{equation*}
K_-(\cdot):=\frac{K(\cdot)-K(-\cdot)}2
\end{equation*}
itself satisfies the non-degeneracy condition \eqref{eq:nd},
then \eqref{eq:main-lower} holds for all $m\ge2$.
\end{theorem}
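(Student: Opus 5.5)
We would follow the strategy of Domelevo--Petermichl--Treil--Volberg \cite{DPTV} and Jiao et al.\ \cite{Xie}, but build the weight directly in $\mathbb R^n$ rather than on the line. The first step is a reduction. By the non-degeneracy condition \eqref{eq:nd}, together with \eqref{eq:size} and \eqref{eq:holder}, for each scale $r$ we find a direction $\theta$ and a region of comparable size such that $K$ is essentially constant there, with $|K|\sim r^{-n}$. Concretely, if $|K(x_0)|\ge a r^{-n}$, $|x_0|\ge r$, then by \eqref{eq:holder} $|K(x)-K(x_0)|\le \frac a2 r^{-n}$ on $B(x_0,\epsilon|x_0|)$ for small $\epsilon$. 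Then, for two small cubes $Q_1,Q_2$ separated by $x_0$, the bilinear form $\langle T(\mathbf 1_{Q_1}\vec e),\mathbf 1_{Q_2}\vec e'\rangle$ is dominated by the value $K(x_0)|Q_1||Q_2|$, up to a small error. This is the continuous replacement for the explicit Hilbert kernel $1/(x-y)$ used in \cite{DPTV}. It also lets us test $T$ against vector functions supported on a self-similar Cantor-type configuration of cubes, chosen so that at each generation children are placed at displacement $\approx x_0$ at the corresponding scale.

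Next we construct the weight. Following \cite{DPTV}, we fix a large parameter and build $W$ iteratively on a Cantor-type set. At each generation the weight rotates slowly: $W=U_k D U_k^{*}$, where the rotation angle changes by a small amount $\delta$ per generation and $D=\mathrm{diag}(t^{\cdot},\dots)$ has eigenvalue ratio tuned so that $[W]_{\mathcal A_p}\lesssim t$. The $\mathcal A_p$ computation in \eqref{eq:Ap} then follows the matrix computations of \cite{Xie}. They depend only on the averages of $W$ and $W^{-p'/p}$ over the dyadic-like cubes of the construction, so they transfer with constants depending on $n$.

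The lower bound comes from a test function $\vec f$ built along the same tree. Pairing $T\vec f$ with a dual function $\vec g\in L^{p'}(W^{-p'/p})$, the pairing splits into a main term that sums the local interactions, which grow like $t^{1+\frac1{p(p-1)}}$ times the norms, and error terms. The error terms are controlled by \eqref{eq:size} and \eqref{eq:holder} through geometric decay across generations, provided the separation ratio between scales is large.

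For the dimension statement: when $K_-$ is non-degenerate, the odd part gives a sign change between the contributions of $Q_1\to Q_2$ and $Q_2\to Q_1$. This is exactly the mechanism the Hilbert transform exploits with two-dimensional rotations, so $m=2$ suffices. In general the even part may dominate, and its symmetric contributions could cancel the rotation effect in $2\times2$. To fix this we use a third coordinate. We let the rotation act in a plane, and use the extra direction to absorb the symmetric term, choosing $\vec f,\vec g$ so that the even-kernel contribution survives through a non-commuting triple of rank-one projections.

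The main obstacle is this last point, together with making the main term genuinely dominate the accumulated errors uniformly in the number of generations. I would first try to choose the scale separation $\lambda$ depending only on $(n,\alpha,C_K,H_K,a)$ so that the errors sum to at most half the main term.
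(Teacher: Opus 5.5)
Your outline has two genuine gaps. The first is structural, not a matter of error terms. Condition \eqref{eq:nd} gives, at each scale $r$, a point $x_0(r)$ with $|K(x_0(r))|\gtrsim r^{-n}$. However, the direction of $x_0(r)$, the argument of $K(x_0(r))$, and whether $K_+$ or $K_-$ carries the size all depend on $r$. The extra power in the bound comes from adding, at a single point $x$, the interactions with the sibling branches at all $\sim N\sim[W]_{\mathcal A_p}$ ancestral scales, and these add up only if they share a sign. For the even kernel $c|x|^{-n+\mathrm{i}s}$ (the multiplier $|\xi|^{-\mathrm{i}s}$, $s\neq0$), the phase turns with the scale and the sum can stay bounded. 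If at scale $L_i$ the large part of $K$ has the wrong parity for the mechanism, the sign of the $i$-th term depends on which child of the $i$-th ancestor contains $x$. A fixed separation $\lambda$ chosen from $(n,\alpha,C_K,H_K,a)$ cannot repair this. Re-phasing $\vec f$ tube by tube does not either, since one tube is seen from different points through common ancestors of different generations.

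The missing idea is the paper's coloring of dyadic scales by $(\tau_k,v_k,\lambda_k)$: parity, net direction, and phase $\lambda_k\in\{\pm1,\pm\mathrm{i}\}$. Combined with Brown's lemma (Lemma \ref{lem:color}), it yields arbitrarily long one-colored chains with gaps in $[A,A+B]$. This forces a Cantor set with \emph{variable} ratios $\kappa_j\in[2^{-(A+B)},2^{-A}]$, where $B$ depends on $K$ itself and not just on its constants. The color also decides between $q=1$ and $q=2$, a choice your dimension discussion leaves open when neither parity part is large at all scales.

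Second, your claim that the $\mathcal A_p$ computation ``depends only on the averages over the dyadic-like cubes of the construction'' is not justified. The characteristic \eqref{eq:Ap} is a supremum over all cubes. A matrix weight prescribed piece by piece on a tree, rotated from one piece to the next, is in general only dyadically controlled: cubes straddling neighbouring pieces are not. In \cite{DPTV,Xie} exactly this step needed quasi-periodization and remodeling.

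The paper avoids this with a different weight, $W=(L^*L)^{p/2}$ with $L=De^{-\Phi\mathsf J_q}$. The diagonal entries of $D$ are powers of scalar weights depending only on $\operatorname{dist}(\cdot,E)$. $\Phi$ is a continuous monotone Cantor-type function with controlled modulus near $E$ (Lemma \ref{lem:phi-modulus}). Each entry of $L(x)L^{-1}(y)$ is then a product of scalar weights at $x$ and $y$ times $[\Phi(y)-\Phi(x)]^{k-l}$. Hence the scalar moment--oscillation estimate of Lemma \ref{lem:all-cubes} controls every cube.

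The same shear is the concrete version of your ``non-commuting triple of rank-one projections.'' For $\vec f=e^{\Phi\mathsf J_q}e_0u$, the last coordinate of $e^{-\Phi(x)\mathsf J_q}T\vec f(x)$ equals $\frac1{q!}\int K(x-y)[\Phi(y)-\Phi(x)]^qu(y)\,dy$. With $q=1$ the factor changes sign with the side on which $y$ lies, which matches an odd kernel. With $q=2$ it does not change sign, which matches an even kernel; this is why $3\times3$ matrices are needed in general.

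Finally, you still need two transfer steps. One is passing from $T_\pm=\frac12(T\pm\mathcal RT\mathcal R)$ to $T$ via the reflected weight $W(-\cdot)$. The other is the duality $W=V^{1-p}$ for $p\in(1,2)$.
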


\begin{corollary} \label{cor:homog}
For the Hilbert transform and the Riesz transforms,
\eqref{eq:main-lower} holds for all $p\in(1,\infty)$ with $m\ge2$.
\end{corollary}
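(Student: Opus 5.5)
The plan is to deduce Corollary~\ref{cor:homog} directly from the second part of Theorem~\ref{thm:main}. It suffices to check that each of these operators is a convolution operator bounded on $L^2$ whose kernel satisfies \eqref{eq:size} and \eqref{eq:holder}, and whose odd part $K_-$ satisfies \eqref{eq:nd}. Then \eqref{eq:main-lower} holds for every $m\ge2$ and every $p\in(1,\infty)$.

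\emph{$L^2$ boundedness and convolution structure.} The Hilbert transform (for $n=1$) is the principal value convolution with $K(x)=\frac{1}{\pi x}$. The Riesz transforms $R_j$, $j\in\{1,\dots,n\}$, are the principal value convolutions with $K_j(x)=c_n\frac{x_j}{|x|^{n+1}}$, where $c_n>0$ is a dimensional constant. Their Fourier multipliers are $-i\,\mathrm{sgn}\,\xi$ and $-i\xi_j/|\xi|$. Both are bounded, so by Plancherel's theorem these operators are bounded on $L^2$.

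\emph{Size and H\"older conditions.} Since $|x_j|\le|x|$, we have $|K_j(x)|\le c_n|x|^{-n}$, and likewise $|K(x)|=\frac{1}{\pi}|x|^{-1}$; this gives \eqref{eq:size}. For \eqref{eq:holder} I take $\alpha=1$. Each kernel is smooth on $\mathbb R^n\setminus\{\mathbf 0\}$ and homogeneous of degree $-n$. Hence its gradient is homogeneous of degree $-n-1$, and since the gradient is bounded on the unit sphere, $|\nabla K(y)|\le C|y|^{-n-1}$.

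Now fix $z\ne\mathbf 0$ and $h$ with $|h|\le|z|/2$. Every point $y=z-sh$ with $s\in[0,1]$ satisfies $|y|\ge|z|/2$. The mean value theorem along this segment therefore gives
\begin{equation*}
|K(z-h)-K(z)|\le |h|\sup_{s\in[0,1]}|\nabla K(z-sh)|\le C2^{n+1}\frac{|h|}{|z|^{n+1}},
\end{equation*}
which is \eqref{eq:holder} with $\alpha=1$.

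\emph{Non-degeneracy of the odd part.} Both kernels are odd, so $K_-=K$, and it suffices to verify \eqref{eq:nd} for $K$ itself. Given $r>0$, choose $x=2re_j$, where $e_j$ is the $j$-th standard basis vector (for the Hilbert transform, simply $x=2r$). Then $x\notin B(\mathbf 0,r)$, and
\begin{equation*}
|K_j(x)|=c_n\frac{2r}{(2r)^{n+1}}=c_n2^{-n}r^{-n},
\end{equation*}
while for the Hilbert transform $|K(2r)|=\frac{1}{2\pi}r^{-1}$. Thus \eqref{eq:nd} holds with $a=c_n2^{-n}$, respectively $a=\frac{1}{2\pi}$.

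All hypotheses of the second part of Theorem~\ref{thm:main} are therefore satisfied, and \eqref{eq:main-lower} follows for all $m\ge2$. There is no real obstacle here. The only point needing care is the H\"older estimate, specifically the observation that the segment from $z$ to $z-h$ stays at distance at least $|z|/2$ from the origin.
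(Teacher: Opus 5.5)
Your proposal is correct and follows the paper's own route. Like the paper, you verify $L^2$ boundedness, the size and H\"older conditions via smoothness and homogeneity of degree $-n$, and non-degeneracy of the odd part $K_-=K$ along the ray $te_j$, then invoke the second part of Theorem~\ref{thm:main}; you just make explicit the mean value estimate with $\alpha=1$ and the constant $a$, which the paper treats as immediate.
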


For the Hilbert transform, Corollary \ref{cor:homog}
recovers the lower bounds of \cite{DPTV,Xie}.

The proof differs from those of Domelevo et al. \cite{DPTV}
and Jiao et al. \cite{Xie} in the construction of the
weight and in the passage to the singular integral. In \cite{DPTV},
successive rotations and stretches of eigenvalues
produce dyadic matrix weights and lower bounds for dyadic model
operators. Quasi-periodization and remodeling then transfer these
bounds to the Hilbert transform while controlling the characteristic
on arbitrary intervals. Jiao et al. \cite{Xie} adapt this scheme to
general $p\in(1,\infty)$, coordinating the powers in the weight and its dual and
estimating the odd Haar shift difference $\mathbb S-\mathbb S^*$;
their transfer step also uses quasi-periodization and remodeling.

Our construction works directly in $\mathbb R^n$. We prescribe scalar
weights on neighborhoods of a variable-ratio Cantor set and couple
their coordinates through a nilpotent shear. The matrix characteristic
is controlled on all cubes by scalar moment and oscillation estimates,
and the operator lower bound follows directly from the original kernel.
Finite coloring selects scales with a common parity, direction, and phase.
Matching the first or second power of the shear increment to
the odd or even kernel part makes the main contributions positive;
this also explains the respective matrix dimensions $2$ and $3$.

We now outline the proof.
We construct a shear-type matrix weight
$W=(L^*L)^{\frac p2}$ with $L=De^{-\Phi\mathsf J_q}$,
where $D$ is a diagonal scalar weight, $\mathsf J_q$ a nilpotent shift
matrix, and $\Phi$ a monotone function of Cantor type. A key commutator
identity expresses the last coordinate of the output as the integral of
the kernel against $[\Phi(y)-\Phi(x)]^q$, with $q=1$ for the odd part
and $q=2$ for the even part. The proof proceeds in the following steps:
\begin{enumerate}[{\rm(i)}]
\item \emph{Finite coloring}
(Section~\ref{sec:color}):
the dyadic scales are colored
with finitely many colors according to the parity part, the direction,
and the phase, and Brown's lemma selects arbitrarily long
chains of one color with uniformly bounded gaps;

\item \emph{Variable-ratio Cantor construction}
(Section~\ref{sec:Cantor}):
the contraction ratios of the Cantor set vary from generation to generation
with the selected scales, so that the geometry of the set and the
oscillation amplitude of $\Phi$ are prescribed independently of each
other; a moment--oscillation lemma valid for all cubes is proved, with
constants depending only on the upper and lower bounds of the ratios,
and not on the length or the position of the chain of good scales;

\item \emph{The matrix weight and its characteristic}
(Section~\ref{sec:matrix weight}):
entry-by-entry estimates of the matrix operator norm give
$[W]_{\mathcal A_p}\sim\delta^{-1}$;

\item \emph{The test function and the operator norm lower bound}
(Section~\ref{sec:test function}):
we choose a test function adapted to the shear.
The last coordinate of the transformed output splits into
sibling, ancestral, descendant, and local contributions.
The sibling terms provide the main positive contribution,
while the remaining terms are controlled or vanish.
This yields the desired lower bound for the operator norm.

\item \emph{Proof of Theorem \ref{thm:main}}
(Section~\ref{sec:main}):
by parity symmetrization, reflection, and duality,
the lower bounds established in Section~5
for $p\in[2,\infty)$ and the symmetrized operators
are transferred to the original operator
and extended to the full range $p\in(1,\infty)$.
\end{enumerate}

At the end of this introduction,
we make some conventions on notation.
The symbol $ A \lesssim B $ means
that $ A \leq CB $ for some positive constant $ C $,
while $ A \sim B $ means $ A \lesssim B \lesssim A $.
If $ f \leq Cg $ and $ g = h $ or $ g \leq h $,
we then write $ f \lesssim g = h $ or $ f \lesssim g \leq h $,
\emph{rather than} $ f \lesssim g \sim h $ or $ f \lesssim g \lesssim h $.
Let $\mathbb Z$ denote the set of all integers,
$\mathbb N:=\{1,2,\ldots\}$, and
$\mathbb Z_+:=\mathbb N\cup\{0\}$.
Finally, in all subsequent proofs we
retain the notation introduced in the relevant statement.

\section{Finite coloring and good scales}
\label{sec:color}

In this section we convert the non-degeneracy condition \eqref{eq:nd}
into chains of good scales of arbitrary length whose consecutive terms
are separated by bounded gaps. Scales are indexed by the dyadic levels
$2^{-k}$, $k\in\mathbb Z_+$; at a given scale the kernel is large only
at some point, in some parity part, and for some phase, but this is
already sufficient.

\begin{lemma}\label{lem:color}
Suppose the non-negative integers are colored with finitely many colors.
Then
\begin{enumerate}[{\rm(i)}]
\item there exist a color and $B\in\mathbb N$ such that there are
integer intervals of arbitrarily large length in which every subinterval
of length $B$ contains that color;

\item for any $A,J\in\mathbb N$,
we can choose indices $k_0<\cdots<k_J$ of the same color with
\begin{equation}\label{eq:color-gaps}
A\le k_{j+1}-k_j\le A+B.
\end{equation}
for all $j\in\{0,\ldots,J-1\}$.
\end{enumerate}
\end{lemma}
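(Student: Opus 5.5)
The plan is to prove (i) as a form of Brown's lemma (one colour class is piecewise syndetic) by induction on the number of colours. Part (ii) will then follow from (i) by a greedy selection inside one long interval.

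\emph{Part (i).} I will prove a slightly stronger statement by induction on $r$. Call a set $T\subset\mathbb Z_+$ \emph{thick} if it contains integer intervals of arbitrarily large length. The claim is: if a thick set $T$ is coloured with $r$ colours, then some colour class $C$ and some $B\in\mathbb N$ have the property in (i), with the long intervals taken inside $T$. Applying this to $T=\mathbb Z_+$ gives (i).

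For $r=1$, every point of every interval in $T$ has the single colour, so $B=1$ works.

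For the inductive step, let $C_1,\dots,C_r$ be the colour classes. Suppose $C_r$ fails the property for every $B$. Fix $B$. Failure means there is $L_B$ such that every integer interval of length at least $L_B$ contained in $T$ has a subinterval of length $B$ disjoint from $C_r$. Since $T$ is thick, intervals of length at least $L_B$ exist in $T$. Hence $T':=T\setminus C_r$ contains an interval of length $B$. Because $B$ is arbitrary, $T'$ is thick. Moreover $T'$ is coloured by $C_1,\dots,C_{r-1}$ only. The induction hypothesis gives a colour $C_i$, $i\le r-1$, and a constant $B'$ with long intervals $I\subset T'\subset T$ in which every length-$B'$ subinterval meets $C_i$, which proves the claim.

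The only point needing care is the negation of the property. Failure for a given $B$ must mean that the lengths of intervals in which every length-$B$ window meets $C_r$ are bounded by some $L_B$. This is exactly the "every long interval contains a $C_r$-free window of length $B$" statement used above. I expect this bookkeeping to be the main (mild) obstacle.

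\emph{Part (ii).} Let $C$ and $B$ be as in (i), and fix $A,J$. Choose an integer interval $I=[N,N+M]$ with $M\ge B+J(A+B)$ in which every length-$B$ subinterval meets $C$. First pick $k_0\in C\cap[N,N+B-1]$. Then proceed inductively: given $k_j\in I$ with $k_j\le N+B-1+j(A+B)$, consider the window $[k_j+A,\,k_j+A+B-1]$. This window lies in $I$ by the choice of $M$, so it contains some $k_{j+1}\in C$. Then $A\le k_{j+1}-k_j\le A+B-1$, and the position bound propagates to $k_{j+1}$. After $J$ steps we obtain $k_0<\cdots<k_J$ of the same colour satisfying \eqref{eq:color-gaps}.
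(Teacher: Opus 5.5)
Your proof is correct. Part (ii) is the same greedy selection the paper uses. Your windows $[k_j+A,k_j+A+B-1]$ contain $B$ integers rather than the paper's $B+1$, which even gives the slightly sharper bound $k_{j+1}-k_j\le A+B-1$. Your invariant $k_j\le N+B-1+j(A+B)$ correctly keeps every window inside $I$.

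The real difference is in part (i). The paper does not prove it; it identifies (i) with Brown's lemma (some color class is piecewise syndetic) and cites it. You reprove Brown's lemma by the standard induction on the number of colors, relative to a thick set $T$.

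Your delicate step is handled properly. If $C_r$ fails for a given $B$, then any interval in $T$ of length at least $\max\{L_B,B\}$ contains a $C_r$-free window of length $B$. Hence $T\setminus C_r$ is thick and is colored by the remaining $r-1$ classes; some of these may be empty, which is harmless. Intervals produced inside $T\setminus C_r$ lie in particular inside $T$, so the induction closes.

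Your route makes the lemma self-contained at the cost of a few lines. The paper's citation is shorter but asks the reader to accept that Brown's statement is literally assertion (i).
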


\begin{proof}
Assertion (i) is a reformulation of Brown's lemma \cite[Lemma 1]{Brown}.
Now, we prove (ii).
Choose an interval of length at least $B+J(A+B)$ as in the first assertion and
proceed inductively. Once $k_j$ has been chosen, select $k_{j+1}$ of
the same color in $[k_j+A,k_j+A+B].$
This gives $A\le k_{j+1}-k_j\le A+B.$
Choosing the initial interval to have length at least
$B+J(A+B)$ allows the construction to continue up to $k_J$.
This completes the proof of Lemma \ref{lem:color}.
\end{proof}

Write the even part
\begin{equation*}
K_+(\cdot):=\frac{K(\cdot)+ K(-\cdot)}2.
\end{equation*}
Then $K=K_++K_-$.
By the non-degeneracy condition \eqref{eq:nd},
we conclude that, for any $k\in\mathbb Z_+$,
there exists $z_k\in\mathbb R^n$ such that
\begin{equation}\label{eq:zk-nd}
|z_k|\ge 2^{-k}
\quad\text{and}\quad
|K(z_k)|\ge a2^{kn}.
\end{equation}
On the other hand, the size condition \eqref{eq:size} gives
$|K(z_k)|\le \frac{C_K}{|z_k|^n}$.
Combining this with \eqref{eq:zk-nd} yields
$a2^{kn}\le \frac{C_K}{|z_k|^n}$, and hence
$|z_k| \le \left(\frac{C_K}{a}\right)^{\frac{1}{n}}2^{-k}$,
which, together with \eqref{eq:zk-nd}, further implies that
\begin{equation}\label{eq:zk-annulus}
1 \le |2^kz_k|
\le \left(\frac{C_K}{a}\right)^{\frac{1}{n}}.
\end{equation}

Moreover, by $K=K_++K_-$, \eqref{eq:zk-nd},
and the triangle inequality, we conclude that
\begin{equation*}
a2^{kn}
\le |K(z_k)|
\le |K_+(z_k)|+|K_-(z_k)|.
\end{equation*}
Hence for each $k\ge0$ there is at least one sign
$\tau_k\in\{+,-\}$ for which
\begin{equation}\label{eq:parity-nd-scale}
|K_{\tau_k}(z_k)|
\ge \frac a2\,2^{kn}.
\end{equation}

Choose a finite $\min\{(\frac{a}{8H_K})^{\frac{1}{\alpha}},\frac12\}$-net $\mathcal V$ of the compact annulus
\begin{equation*}
\left\{
x\in\mathbb R^n:
1\le |x|\le \left(\frac{C_K}{a}\right)^{\frac{1}{n}}
\right\}.
\end{equation*}
Note that $K_+$ and $K_-$ also satisfy the H\"older regularity condition \eqref{eq:holder}
(with the same constant $H_K$).
From \eqref{eq:zk-annulus} and the definition of $\mathcal V$, it follows that,
for any $k\in\mathbb Z_+$, there exists $v_k\in\mathcal V$ such that
\begin{equation} \label{delta}
|2^kz_k-v_k|
< \min\left\{\left( \frac{a}{8 H_K} \right)^{\frac{1}{\alpha}},\,
\frac12\right\}.
\end{equation}
This, together with \eqref{eq:zk-annulus}, further implies that
$|z_k-2^{-k}v_k|=2^{-k}|2^kz_k-v_k|\le2^{-(k+1)}\le\frac{|z_k|}{2}$,
so the H\"older regularity condition \eqref{eq:holder} of $K_{\tau_k}$ gives
\begin{equation} \label{est1}
\left|
K_{\tau_k}(z_k)-K_{\tau_k}(2^{-k}v_k)
\right|
\le H_K \frac{|z_k-2^{-k}v_k|^\alpha}{|z_k|^{n+\alpha}}
=  H_K \frac{|2^kz_k-v_k|^\alpha}{|2^kz_k|^{n+\alpha}} 2^{kn}
\le \frac{a}{8} 2^{kn},
\end{equation}
where the last inequality follows from \eqref{delta} and $|2^kz_k|\ge1$.
Choose $\lambda_k\in\Lambda:=\{1,\mathrm i,-1,-\mathrm i\}$ such that
\begin{equation} \label{est2}
\operatorname{Re}{\lambda_k K_{\tau_k}(z_k)}
\ge
\frac{\sqrt2}{2} |K_{\tau_k}(z_k)|
\ge
\frac{\sqrt2 a}{4}2^{kn},
\end{equation}
where the last inequality uses \eqref{eq:parity-nd-scale}.
Set
\begin{equation*}
a_0:=\left(\frac{\sqrt2}{4}-\frac18\right)a>0.
\end{equation*}
Combining \eqref{est1} and \eqref{est2}, we obtain
\begin{align} \label{K:estimate}
\operatorname{Re}{\lambda_k K_{\tau_k}(2^{-k}v_k)}
&\ge \operatorname{Re}{\lambda_k K_{\tau_k}(z_k)}
- \bigl|\operatorname{Re}{\lambda_k K_{\tau_k}(z_k)}
- \operatorname{Re}{\lambda_k K_{\tau_k}(2^{-k}v_k)}\bigr| \notag \\
&\ge a_0 2^{kn}
\ge a_0 |2^{-k}v_k|^{-n},
\end{align}
where the last inequality uses $|v_k|\ge1$.

Thus, each scale $k\in\mathbb Z_+$ receives a color
$(\tau_k,v_k,\lambda_k)$ from the finite set
$\{+,-\}\times\mathcal V\times\Lambda$.
If the odd part $K_-$ itself satisfies the non-degeneracy condition \eqref{eq:nd}, we may shrink $a$
so that the selection above is carried out for $K_-$ only, and all colors have $\tau_k=-$.

Fix the integer
\begin{equation*}
A:=\max\left\{4,\left\lceil
2+\frac1\alpha\log_2\left(\frac{2H_K}{a_0}\right)
\right\rceil\right\}.
\end{equation*}
Then $A\ge4$ and
\begin{equation}\label{eq:A-choice}
H_K(4\cdot2^{-A})^\alpha
\le \frac{a_0}{2}.
\end{equation}
By Lemma \ref{lem:color}, there exist a color
$(\tau,v,\lambda)$ and an integer $B\ge1$ such that,
for any $J\in\mathbb N$, we can choose indices of that color
\begin{equation*}
k_0<k_1<\cdots<k_J
\quad\text{and}\quad
A\le k_{j+1}-k_j\le A+B
\ \text{ for all } j\in\{0,\ldots,J-1\}.
\end{equation*}

Fix the unit vector in the good direction,
\begin{equation*}
\vartheta:=\frac{v}{|v|}.
\end{equation*}
Set $L_0:=|v|2^{-k_0}$ and, for any $j\in\mathbb N$, let
\begin{equation}\label{eq:scales}
\kappa_j:=
\begin{cases}
2^{-k_j+k_{j-1}} &\text{if } 1\le j\le J,\\
2^{-A} &\text{if } j\ge J+1,
\end{cases}
\qquad
L_j:= \kappa_j L_{j-1}.
\end{equation}
Then \eqref{eq:color-gaps} implies that, for any $j\in\mathbb N$,
\begin{equation*}
\kappa_*:=2^{-(A+B)}
\le \kappa_j
\le 2^{-A}=:\kappa^*
\le\frac1{16},
\end{equation*}
and $L_j\vartheta=2^{-k_j}v$ for $0\le j\le J$.
By this and \eqref{K:estimate}, we find that,
for any $j\in\{0,\ldots,J\}$,
\begin{equation}\label{eq:sample}
\operatorname{Re}{\lambda K_\tau(L_j\vartheta)}
=\operatorname{Re}{\lambda K_\tau(2^{-k_j}v)}
\ge a_0|2^{-k_j}v|^{-n}
=a_0L_j^{-n}.
\end{equation}

Finally, set $q=1$ if $\tau=-$ and $q=2$ if $\tau=+$;
the construction below uses $(q+1)\times(q+1)$ matrices.

\section{The variable-ratio Cantor set and scalar weights}
\label{sec:Cantor}

In this section, we construct a variable-ratio Cantor set
using the parameters introduced in Section \ref{sec:color},
and then use this Cantor set to define the scalar weights
that constitute the matrix weight.
We first define the two endpoint scalar weights $w_0,w_q$ and then
obtain the intermediate coordinate weights by geometric interpolation.
Recall that $q\in\{1,2\}$; we assume $p\in[2,\infty)$. Set
\begin{equation}\label{eq:params}
\varepsilon\in\left(0, \frac1{10}\right),\quad
\delta:=\varepsilon^{p-1},\quad\text{and}\quad
\eta:=\frac{(p-1)\varepsilon+\delta}{p}.
\end{equation}
Then $\delta\le\eta\le\varepsilon$.

Starting from $[0,L_0]$, every retained interval of generation $j\in\mathbb Z_+$
has length $L_j$; its two children of generation $j+1$ have length
$L_{j+1}$ and abut its left and right endpoints, respectively, the
contraction ratios $\kappa_j$ being given by \eqref{eq:scales}. Let
$\mathcal I_j$ denote the family of retained intervals of generation
$j$ and $\mathfrak C$ the resulting Cantor set. Let
\begin{equation*}
E:=\{t\vartheta:t\in\mathfrak C\}\subset\mathbb R^n
\quad\text{and}\quad
\rho(x):=\operatorname{dist}(x,E)
\quad\text{for all } x\in\mathbb R^n.
\end{equation*}
For $j\in\mathbb Z_+$, a simple geometric observation shows that
\begin{equation*}
V_j:=2^jL_j^n\sim |\{x\in\mathbb R^n:\ \rho(x)\le L_j\}|.
\end{equation*}
Since $\kappa^*\le\tfrac1{16}$, the set $E$ has Lebesgue
measure zero.

Define the two endpoint weights
\begin{equation*}
w_{0,j}:=V_j^{p-1}e^{(p-1)\varepsilon j},
\qquad w_{q,j}:=\delta V_j^{-1}e^{-\delta j},
\end{equation*}
and, for any $k\in\mathbb Z\cap[0,q]$,
\begin{equation}\label{eq:wk-expand}
w_{k,j}
:=w_{0,j}^{1-\frac{k}{q}}w_{q,j}^{\frac{k}{q}}
=\delta^{\frac{k}{q}}V_j^{p(1-\frac{k}{q})-1}
e^{[(p-1)\varepsilon(1-\frac{k}{q})-\frac{k}{q}\delta]j}.
\end{equation}
Let
\begin{equation*}
A_j:=\left(\delta\frac{w_{0,j}}{w_{q,j}}\right)^{\frac1{pq}}
=\bigl(V_je^{\eta j}\bigr)^{\frac1q}.
\end{equation*}
We record for repeated use the identities
\begin{equation}\label{eq:key-identities}
\begin{cases}
V_jw_{0,j}^{-\frac{1}{p-1}}=e^{-\varepsilon j},\quad
V_jw_{q,j}=\delta e^{-\delta j},\\
\displaystyle\frac{w_{k,j}}{w_{l,j}}
=\delta^{\frac{k-l}{q}}A_j^{-(k-l)p},\qquad 0\le l\le k\le q.
\end{cases}
\end{equation}
Since $q\le2$ and $2\kappa_{j+1}^n\le\tfrac18$, we have
\begin{equation}\label{eq:A-decay}
\frac{2A_{j+1}}{A_j}
=2\bigl(2\kappa_{j+1}^ne^\eta\bigr)^{\frac1q}
\le2\left(\frac{e^{\frac1{10}}}8\right)^{\frac12}<1.
\end{equation}

Let $\psi$ be the continuous non-decreasing piecewise linear function,
for any $s\in\mathbb R$,
\begin{equation*}
\psi(s):=
\begin{cases}
0&\text{if } s\le\frac18,\\
2s-\frac14&\text{if } \frac18<s<\frac38,\\
\frac12&\text{if } \frac38\le s\le\frac58,\\
2s-\frac34&\text{if } \frac58<s<\frac78,\\
1&\text{if } s\ge\frac78.
\end{cases}
\end{equation*}
For $I=[a_I,a_I+L_j]\in\mathcal I_j$, let
\begin{equation*}
\alpha_I:=a_I+L_{j+1},\quad
\beta_I:=a_I+L_j-L_{j+1},\quad
G_I:=(\alpha_I,\beta_I),\quad\text{and}\quad
\Delta_j:=A_j-2A_{j+1}.
\end{equation*}
For any $t\in\mathbb R$, define
\begin{equation*}
\Phi(t):=\sum_{j=0}^{\infty}\Delta_j
\sum_{I\in\mathcal I_j}
\psi\left(\frac{t-\alpha_I}{\beta_I-\alpha_I}\right).
\end{equation*}
It follows from \eqref{eq:A-decay} that the series
$\sum_{j=0}^\infty 2^jA_j$ converges, and hence
\begin{equation*}
\sum_{j=0}^\infty 2^j\Delta_j
=\sum_{j=0}^\infty (2^jA_j-2^{j+1}A_{j+1})
=A_0.
\end{equation*}
Thus, the series in the definition of $\Phi$
converges absolutely and uniformly,
$\Phi$ is continuous and non-decreasing, and
\begin{equation*}
0\le\Phi\le A_0,\qquad
\Phi(t)=0\ (t\le0),\qquad
\Phi(t)=A_0\ (t\ge L_0).
\end{equation*}
For any $j\in\mathbb Z_+$ and $I\in\mathcal I_j$,
\begin{align}\label{eq:Phi-interval-osc}
\operatorname{osc}_I\Phi
&:=\sup_{s,t\in I}|\Phi(s)-\Phi(t)|
=\Phi(a_I+L_j)-\Phi(a_I) \notag \\
&\phantom{:}=\sum_{h=j}^{\infty} \Delta_h 2^{h-j}
= 2^{-j} \sum_{h=j}^\infty (2^hA_h-2^{h+1}A_{h+1})
= A_j.
\end{align}
We keep the notation $\Phi$ for the extension, for any $x\in\mathbb R^n$,
\begin{equation*}
\Phi(x):=\Phi(\langle x,\vartheta\rangle).
\end{equation*}

For any $t\in\mathbb R$, let $r(t):=\operatorname{dist}(t,\mathfrak C)$.

\begin{lemma}\label{lem:phi-modulus}
Let $i\in\mathbb Z_+$ and $ R\in (L_{i+1}, L_i]$.
Then, for any $s,t\in\mathbb R$ with
\begin{equation*}
r(s),r(t)\le\tfrac32R
\quad\text{and}\quad
|s-t|\le R,
\end{equation*}
we have $|\Phi(s)-\Phi(t)|\le3A_i$.
\end{lemma}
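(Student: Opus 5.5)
We first record the structure of $\Phi$ outside the Cantor set. On each gap $G_I$ with $I\in\mathcal I_h$, only the term with index $I$ varies. All finer terms are constant there, since the retained intervals of generation $h'>h$ avoid $G_I$. The coarser terms are also constant there, because $G_I$ lies in the middle third of... more precisely, $G_I$ lies inside a child of each ancestor, where the ancestor's $\psi$ is flat. Hence $\Phi$ is constant on each connected component of $\mathbb R\setminus\mathfrak C$ except on the gaps $G_I$. On $G_I$ it increases by exactly $\Delta_h\le A_h$, and $\Phi$ is also constant on $(-\infty,0]$ and $[L_0,\infty)$. Together with \eqref{eq:Phi-interval-osc}, this gives the basic principle: if $s\le t$, then $\Phi(t)-\Phi(s)$ is at most the sum of the oscillations $A_h$ over a covering of $[s,t]\cap\mathfrak C$ by retained intervals, plus $\Delta_h$ for each gap $G_I$ met by $[s,t]$ (only its intersection matters).

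Now fix $s<t$ as in the lemma, with $R\in(L_{i+1},L_i]$. The plan is to show that the interval $[s,t]$ can interact with at most a bounded number of generation-$(i+1)$ objects, or one generation-$i$ object.

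\emph{Key geometric step.} Since $\kappa_{i+1}\le\frac1{16}$, distinct intervals of $\mathcal I_{i}$ are separated by gaps of length at least $L_{i-1}-2L_i\ge 14L_i$ when $i\ge1$; the case $i=0$ is trivial. Also $r(s),r(t)\le\frac32R\le\frac32L_i$ and $|s-t|\le L_i$. We claim that all points of $\mathfrak C$ within distance $\frac32L_i$ of $[s,t]$ lie in a single $I\in\mathcal I_i$. The $\frac32L_i$-neighbourhood of $[s,t]$ has length at most $4L_i<14L_i$, so it cannot meet two distinct generation-$i$ intervals. By the nearness conditions, $s$ and $t$ both lie within $\frac32L_i$ of this $I$.

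Then we estimate $\Phi(t)-\Phi(s)$ as follows. The contribution from $I$ itself is $\operatorname{osc}_I\Phi=A_i$. To the left of $I$, within the $\frac32L_i$-neighbourhood, no $\mathfrak C$ points occur, so that portion lies in a single complementary region. That region is part of the gap $G_{I'}$ of an ancestor $I'$ of generation $h<i$, or lies outside $[0,L_0]$. The increment of $\Phi$ there is governed by $\Delta_h\psi(\cdot)$ restricted to a subinterval of $G_{I'}$ of length at most $\frac32L_i$ adjacent to an endpoint of $G_{I'}$. The ratio $\psi$ is flat on the first and last eighths of its argument range, and $G_{I'}$ has length $L_h-2L_{h+1}\ge\frac78L_h\ge 14L_i$. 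Since $\frac32L_i<\frac18|G_{I'}|$, the increment there is zero. The same holds on the right. Hence $|\Phi(s)-\Phi(t)|\le A_i\le3A_i$.

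The main obstacle is this flatness verification: one must check carefully that the near-endpoint portion of the ancestor gap lies where $\psi$ is constant, using $\kappa^*\le\frac1{16}$. If it fails for a boundary case, the fallback is to bound that increment crudely. That increment is at most $2\cdot\frac{\Delta_h}{\beta-\alpha}\cdot\frac32L_i\le 4A_hL_i/L_h$. We would then compare this with $A_i$ using $A_i/A_h\gtrsim(\kappa\text{-ratios})^{n/q}$; this explains the generous constant $3$.
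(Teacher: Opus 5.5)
Your argument is correct and uses the same two facts as the paper's proof: distinct generation-$i$ intervals are at least $14L_i$ apart, and every gap of generation $h<i$ has length at least $14L_i$, so its end eighths, where $\psi$ is flat, are longer than $\tfrac32L_i$. The paper instead runs a three-term triangle inequality through nearest points $s_0,t_0\in\mathfrak C$, which is where its constant $3$ comes from, whereas your localization to a single $I\in\mathcal I_i$ gives the sharper bound $|\Phi(s)-\Phi(t)|\le A_i$. The flatness check you call the main obstacle is already complete as written, so your fallback is unnecessary; it would also fail in general, since $A_hL_i/(L_hA_i)$ is unbounded when $n>q$.
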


\begin{proof}
Choose nearest points $s_0,t_0\in\mathfrak C$ to $s,t$, then
$$
|s_0-t_0|
\leq |s_0-s|+|s-t|+|t-t_0|
\le 4R
\le 4L_i.
$$
For $i\in\mathbb N$, distinct intervals of generation $i$ are at least $14L_i$
apart, so $s_0$ and $t_0$ lie in the same interval of generation $i$.
By this and \eqref{eq:Phi-interval-osc},
we obtain $|\Phi(s_0)-\Phi(t_0)|\le A_i$.

If $s\in\mathfrak C$, or if $s\notin[0,L_0]$, or if $s$ lies in a constant
end piece of some gap, then $\Phi(s)=\Phi(s_0)$. Otherwise $s$ lies in
some gap $G_I$ of some generation $j$ with
\begin{equation*}
r(s)\ge\tfrac18|G_I|\ge\tfrac7{64}L_j.
\end{equation*}
If $j<i$, then $L_j\ge16L_i$, and hence
$r(s)\ge\tfrac{7L_i}4>\tfrac{3R}2$, which leads to a contradiction.
Thus, $j\ge i$ and
$|\Phi(s)-\Phi(s_0)|\le\Delta_j\le A_j\le A_i$.
The same bound holds for $t$. Combining the above estimates, we obtain
$$
|\Phi(s)-\Phi(t)|
\leq |\Phi(s)-\Phi(s_0)|
+|\Phi(s_0)-\Phi(t_0)|
+|\Phi(t_0)-\Phi(t)|
\leq 3A_i.
$$
This completes the proof of Lemma \ref{lem:phi-modulus}.
\end{proof}

For $I\in\mathcal I_j$, let $P_I$ be the middle quarter of its gap,
\begin{equation*}
P_I:=\left(\alpha_I+\tfrac38(\beta_I-\alpha_I),
\alpha_I+\tfrac58(\beta_I-\alpha_I)\right),
\end{equation*}
and define the tube
\begin{equation*}
H_I:=\left\{t\vartheta+z:\ t\in P_I,\ z\perp\vartheta,\ |z|<\frac{L_j}{10}\right\},
\qquad H_j:=\bigcup_{I\in\mathcal I_j}H_I.
\end{equation*}
Distinct gaps have disjoint axial projections, so the tubes $H_I$ are pairwise
disjoint. The function $\Phi$ is constant on each $H_I$,
\begin{equation}\label{eq:plateau-volume}
|H_I|\sim L_j^n,
\quad\text{and}\quad
|H_j|\sim2^jL_j^n=V_j.
\end{equation}

\begin{lemma}\label{lem:plateau-distance}
For any $j\in\mathbb Z_+$ and $x\in H_j$, we have $L_{j+1}<\rho(x)<L_j$.
\end{lemma}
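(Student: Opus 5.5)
The plan is to reduce the claim to a one-dimensional computation along the axis $\mathbb R\vartheta$. Since $E=\{s\vartheta:\ s\in\mathfrak C\}$ lies on the line $\mathbb R\vartheta$, write $x\in H_I$ (with $I\in\mathcal I_j$) as $x=t\vartheta+z$, where $t\in P_I$ and $z\perp\vartheta$ with $|z|<\frac{L_j}{10}$. For any $s\in\mathfrak C$, Pythagoras gives $|x-s\vartheta|^2=(t-s)^2+|z|^2$. Taking the infimum over $s$, we get
\begin{equation*}
\rho(x)^2=r(t)^2+|z|^2 .
\end{equation*}
So it suffices to bound $r(t)=\operatorname{dist}(t,\mathfrak C)$ for $t\in P_I$.

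\textbf{Step 1: locate the nearest Cantor points.} The endpoints $\alpha_I$ and $\beta_I$ belong to $\mathfrak C$. Indeed, $\alpha_I$ is the right endpoint of the left child of $I$, and every endpoint of a retained interval stays retained in all later generations, since children abut the endpoints of their parent. Moreover, $\mathfrak C\cap I$ is contained in the union of the two children, so $\mathfrak C\cap I\cap G_I=\emptyset$. Points of $\mathfrak C$ outside $I$ are farther from $t$ than the endpoints of $I$. Hence
\begin{equation*}
r(t)=\min\{t-\alpha_I,\ \beta_I-t\}.
\end{equation*}

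\textbf{Step 2: the lower bound.} The gap has length $|G_I|=L_j-2L_{j+1}\ge\frac78L_j$, because $\kappa_{j+1}\le\frac1{16}$. Since $t$ lies in the middle quarter $P_I$, it is at distance at least $\frac38|G_I|\ge\frac{21}{64}L_j$ from both endpoints. Therefore
\begin{equation*}
\rho(x)\ge r(t)\ge\tfrac{21}{64}L_j>\tfrac1{16}L_j\ge L_{j+1}.
\end{equation*}

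\textbf{Step 3: the upper bound.} From Step 1 we have $r(t)\le\frac12|G_I|<\frac12L_j$. Combined with $|z|<\frac{L_j}{10}$, this gives
\begin{equation*}
\rho(x)^2<\left(\tfrac14+\tfrac1{100}\right)L_j^2<L_j^2 .
\end{equation*}

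The only point requiring care is Step 1, namely the justification that the gap endpoints lie in $\mathfrak C$ and that no point of $\mathfrak C$ lies inside $G_I$. Both facts follow directly from the nested construction.
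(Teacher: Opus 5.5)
Your proof is correct and follows essentially the same route as the paper: the identity $\rho(x)^2=r(t)^2+|z|^2$ combined with $\frac{21}{64}L_j\le\frac38(L_j-2L_{j+1})\le r(t)\le\frac12(L_j-2L_{j+1})\le\frac12L_j$ for $t\in P_I$. Your Step 1 just spells out what the paper leaves implicit, namely that the nearest points of $\mathfrak C$ to $t\in G_I$ are the gap endpoints $\alpha_I,\beta_I$.
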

\begin{proof}
If $x=t\vartheta+z\in H_I$ with $I\in\mathcal I_j$, then
\begin{equation*}
\tfrac{21}{64}L_j
\le\tfrac38(L_j-2L_{j+1})\le r(t)
\le\tfrac12(L_j-2L_{j+1})\le\tfrac12L_j.
\end{equation*}
This, together with $\rho(x)^2=r(t)^2+|z|^2$, further implies that
\begin{equation*}
\rho(x)\ge\tfrac{21}{64}L_j>L_{j+1}
\quad\text{and}\quad
\rho(x)<\sqrt{\tfrac14+\tfrac1{100}} L_j<L_j,
\end{equation*}
which completes the proof of Lemma \ref{lem:plateau-distance}.
\end{proof}

Extend the layer-wise weights to the whole space by setting,
for any $k\in\mathbb Z\cap[0,q]$ and $x\in\mathbb R^n$,
\begin{equation}\label{eq:outside}
w_k(x):=
\begin{cases}
w_{k,0} &\text{if } \rho(x)>L_0\ \text{or}\ \rho(x)=0,\\
w_{k,j} &\text{if } L_{j+1}<\rho(x)\le L_j \text{ for some } j\in\mathbb Z_+.
\end{cases}
\end{equation}
To estimate the integral of $w_k$, we need the following lemma.

\begin{lemma}\label{lem:tail}
Let $k\in\mathbb Z\cap[0,q]$ and $i\in\mathbb Z_+$. Then
\begin{align*}
\sum_{j=i}^{\infty}2^{j-i}L_j^nw_{k,j}
&\sim L_i^nw_{k,i}\delta^{-\mathbf1_{\{k=q\}}}, \\
\sum_{j=i}^{\infty}2^{j-i}L_j^nw_{k,j}^{-\frac{1}{p-1}}
&\sim L_i^nw_{k,i}^{-\frac{1}{p-1}}
\varepsilon^{-\mathbf1_{\{k=0\}}},
\end{align*}
where the positive equivalence constants are absolute.
\end{lemma}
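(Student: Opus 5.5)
The plan is to reduce both sums to geometric series by normalizing with $V_j=2^jL_j^n$. Since $2^{j-i}L_j^n/L_i^n=V_j/V_i$, we have
\begin{equation*}
\frac{\sum_{j\ge i}2^{j-i}L_j^n w_{k,j}^{\sigma}}{L_i^n w_{k,i}^{\sigma}}
=\sum_{j\ge i}\frac{V_jw_{k,j}^{\sigma}}{V_iw_{k,i}^{\sigma}},
\qquad \sigma\in\left\{1,-\tfrac{1}{p-1}\right\}.
\end{equation*}
It therefore suffices to compute the ratio $\theta_j:=V_{j+1}w_{k,j+1}^\sigma/(V_jw_{k,j}^\sigma)$. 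Two cases will give a sum comparable to the first term: either $\theta_j\le\tfrac12$ uniformly, or $\theta_j$ equals $e^{-\delta}$ or $e^{-\varepsilon}$ exactly, in which case the sum is $(1-e^{-\delta})^{-1}\sim\delta^{-1}$, respectively $(1-e^{-\varepsilon})^{-1}\sim\varepsilon^{-1}$. The lower bounds come for free from the $j=i$ term together with positivity.

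For the first sum I will use \eqref{eq:wk-expand}. It gives
\begin{equation*}
V_jw_{k,j}=\delta^{k/q}V_j^{p(1-k/q)}e^{c_kj},
\qquad c_k:=(p-1)\varepsilon(1-\tfrac kq)-\tfrac kq\delta .
\end{equation*}
For $k=q$ this is $\delta e^{-\delta j}$ (see \eqref{eq:key-identities}), so $\theta_j=e^{-\delta}$ and the sum is $\sim\delta^{-1}$, with absolute constants because $\delta<1$.

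For $k<q$ we use $q\le2$, which gives $1-k/q\ge\tfrac12$, together with $p\ge2$ and $V_{j+1}/V_j=2\kappa_{j+1}^n\le\tfrac18$. Since $c_k\le(p-1)\varepsilon<p/10$, this yields
\begin{equation*}
\theta_j\le 8^{-p/2}e^{p/10}=\bigl(e^{1/10}/\sqrt8\bigr)^p<\tfrac12 .
\end{equation*}

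For the second sum the same computation gives
\begin{equation*}
V_jw_{k,j}^{-\frac1{p-1}}=\delta^{-\frac{k}{q(p-1)}}\,V_j^{\frac{pk}{q(p-1)}}\,e^{-\frac{c_k}{p-1}j}.
\end{equation*}
The exponent of $V_j$ is $1-\frac{p(1-k/q)-1}{p-1}=\frac{pk}{q(p-1)}$. For $k=0$ this is $e^{-\varepsilon j}$, as in \eqref{eq:key-identities}, so the sum is $\sim\varepsilon^{-1}$.

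For $k\ge1$ the exponent of $V$ is at least $\frac{p}{2(p-1)}\ge\tfrac12$. Moreover
\begin{equation*}
-\frac{c_k}{p-1}\le\frac{k\delta}{q(p-1)}\le\delta<\tfrac1{10}.
\end{equation*}
Hence $\theta_j\le 8^{-1/2}e^{1/10}<\tfrac12$.

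The only delicate point is making the geometric ratio uniform in $p$ in the case $k<q$. There the growth factor $e^{(p-1)\varepsilon}$ is unbounded in $p$. It is beaten because the power of $V_j$ also grows linearly in $p$, which is exactly where $q\le 2$ and $p\ge2$ are used.
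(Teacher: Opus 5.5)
Your proposal is correct and follows essentially the same route as the paper. Like the paper, you normalize by $V_j=2^jL_j^n$, evaluate the endpoint cases exactly as $\delta e^{-\delta j}$ and $e^{-\varepsilon j}$ using $1-e^{-s}\sim s$, and bound the ratio of consecutive terms uniformly below $1$ in the remaining cases, where the exponent of $V_j$ is at least $\tfrac12$ (the paper uses $p(1-\theta)\ge1$ and $p'\theta\ge\tfrac12$ for the same purpose).
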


\begin{proof}
Let $\theta:=\frac{k}{q}$. By \eqref{eq:wk-expand},
\begin{align*}
2^j L_j^n w_{k,j}
&=V_jw_{k,j}
=\delta^\theta V_j^{p(1-\theta)}
e^{[(p-1)\varepsilon(1-\theta)-\theta\delta]j},\\
2^j L_j^n w_{k,j}^{-\frac{1}{p-1}}
&=V_jw_{k,j}^{-\frac{1}{p-1}}
=\delta^{-\frac{\theta}{p-1}}V_j^{p'\theta}
e^{[-\varepsilon(1-\theta)+\frac{\theta\delta}{p-1}]j}.
\end{align*}
For $k=q$ the first expression equals $\delta e^{-\delta j}$, and for
$k=0$ the second equals $e^{-\varepsilon j}$.
Combining this and $1-e^{-s}\sim s$ for $0<s\le1$,
we obtain the desired estimates.

For $k<q$, the ratio of consecutive terms of the first sum is at most
\begin{equation*}
\left[8^{-p}e^{(p-1)\varepsilon}\right]^{1-\theta}
\le\left(\frac{e^{\frac1{10}}}8\right)^{p(1-\theta)}
\le\frac{e^{\frac1{10}}}8<1,
\end{equation*}
where we used $p(1-\theta)\ge1$.
For $k>0$, the ratio of consecutive terms of the second sum is at most
\begin{equation*}
8^{-p'\theta}e^{\frac{\theta\delta}{p-1}}
=\left(\frac{e^{\frac{\delta}{p}}}8\right)^{p'\theta}
\le\left(\frac{e^{\frac1{10}}}8\right)^{\frac12}<1,
\end{equation*}
since $p'\theta\ge\tfrac12$. Hence in both cases the tail sum is
comparable to its first term.
This completes the proof of Lemma \ref{lem:tail}.
\end{proof}

\begin{lemma}\label{lem:all-cubes}
Let $l,k\in\mathbb Z$ satisfy $0\le l\le k\le q$.
Then, for any cube $Q\subset\mathbb R^n$,
\begin{equation}\label{eq:moment-oscillation}
\bigl(\operatorname{osc}_Q\Phi\bigr)^{(k-l)p}
\left(\fint_Qw_k\right)
\left(\fint_Qw_l^{-\frac{1}{p-1}}\right)^{p-1}
\lesssim \delta^{\frac{k-l}{q}-\mathbf1_{\{k=q\}}-\mathbf1_{\{l=0\}}},
\end{equation}
where the implicit positive constant is independent of $Q$,
$\varepsilon$, $J$, and the choice of $\{k_i\}_{i=0}^J$.
\end{lemma}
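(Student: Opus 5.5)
Our plan is to split according to the size and location of $Q$ relative to the set $E$, with $\ell(Q)$ the side length of $Q$ and $\rho_Q:=\operatorname{dist}(Q,E)$. The oscillation is controlled by Lemma \ref{lem:phi-modulus} and \eqref{eq:Phi-interval-osc}; the averages are controlled by Lemma \ref{lem:tail} and the identities \eqref{eq:key-identities}. Since $\Phi(x)=\Phi(\langle x,\vartheta\rangle)$, we have $\operatorname{osc}_Q\Phi\le\operatorname{osc}_{\pi(Q)}\Phi$, where $\pi(Q)$ is the projection of $Q$ onto $\mathbb R\vartheta$, an interval of length $\lesssim\ell(Q)$. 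We will also use throughout that $0\le\Phi\le A_0$ and that $\Phi$ is constant outside $[0,L_0]$ in the axial variable.

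\textbf{Case 1: $Q$ far from $E$}, i.e. $\rho_Q\ge 2\sqrt n\,\ell(Q)$ or $\rho_Q>L_0$ (after enlarging constants). Then $\rho$ varies by at most a bounded factor on $Q$, so $Q$ meets only boundedly many layers $\{L_{j+1}<\rho\le L_j\}$, say with $j\in\{i,i+1\}$. Hence $w_k$ and $w_l$ are essentially constant on $Q$. (If instead $\rho>L_0$ on $Q$, they are exactly constant with index $0$.) So the two averages give at most $w_{k,i}/w_{l,i}\cdot O(1)$, using that consecutive layers have comparable ratios once \eqref{eq:key-identities} is applied with $A_{i+1}\sim A_i$ up to the factor $\kappa^{n/q}$ bounded below by $\kappa_*$.

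For the oscillation, every point of $Q$ has $r\le\rho\lesssim L_i$ and $Q$ has diameter $\lesssim L_i$. Lemma \ref{lem:phi-modulus}, applied at a scale comparable to $L_i$, then gives $\operatorname{osc}_Q\Phi\lesssim A_i$, with constants depending on $\kappa_*$; this requires bounding the number of generations skipped. By \eqref{eq:key-identities} the product is $\lesssim A_i^{(k-l)p}\delta^{\frac{k-l}q}A_i^{-(k-l)p}=\delta^{\frac{k-l}q}$, which is at most the right-hand side because $\delta<1$ and the exponent only decreases when the indicators are subtracted. In the region $\rho>L_0$ we use $\operatorname{osc}\le A_0$ together with the same identity at $j=0$.

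\textbf{Case 2: $Q$ close to $E$}, i.e. $\rho_Q\lesssim\ell(Q)$. Choose $i$ with $L_{i+1}<\ell(Q)\le L_i$; if $\ell(Q)>L_0$, take $i=0$ and treat the large cube separately. The $3\sqrt n Q$ neighbourhood then meets at most a bounded number (depending on $\kappa_*$ and $n$) of generation-$i$ intervals, because distinct such intervals are $\ge 14L_i$ apart. Inside $Q$, the region where $\rho\in(L_{j+1},L_j]$ for $j\ge i$ has measure $\lesssim 2^{j-i}L_j^n$, while the region with $\rho\gtrsim L_i$ carries the index-$i$ weight. Therefore
\[
\int_Q w_k\lesssim\sum_{j\ge i}2^{j-i}L_j^nw_{k,j}+|Q|w_{k,i}.
\]
Lemma \ref{lem:tail} converts this to $\fint_Qw_k\lesssim w_{k,i}\delta^{-\mathbf 1_{\{k=q\}}}$, using $|Q|\sim L_i^n$ up to $\kappa_*$-constants. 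In the same way, $\bigl(\fint_Q w_l^{-\frac1{p-1}}\bigr)^{p-1}\lesssim w_{l,i}^{-1}\varepsilon^{-(p-1)\mathbf 1_{\{l=0\}}}=w_{l,i}^{-1}\delta^{-\mathbf 1_{\{l=0\}}}$. By Lemma \ref{lem:phi-modulus} and \eqref{eq:Phi-interval-osc}, $\operatorname{osc}_Q\Phi\lesssim A_i$, since all points of $Q$ satisfy $r\le\rho\lesssim L_i$ and $\pi(Q)$ meets boundedly many generation-$i$ intervals, each of oscillation $A_i$ and each gap-piece of oscillation $\le A_i$. Multiplying and applying \eqref{eq:key-identities} yields exactly $\delta^{\frac{k-l}q-\mathbf 1_{\{k=q\}}-\mathbf 1_{\{l=0\}}}$.

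For the huge cubes with $\ell(Q)>L_0$: the oscillation is $\le A_0$, and the averages are dominated by the outside value $w_{\cdot,0}$ plus the contribution near $E$, which Lemma \ref{lem:tail} with $i=0$ bounds by $|Q|^{-1}L_0^n$ times the same quantity. So the estimate at $i=0$ persists.

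\textbf{Main obstacle.} The delicate step is the uniformity of the near-$E$ measure bounds and of the oscillation count when the ratios $\kappa_j$ vary. One must check that a cube of size $\sim L_i$ meets only $O(1)$ generation-$i$ intervals, and that $|\{x\in Q:\rho(x)\le L_j\}|\lesssim 2^{j-i}L_j^n$ for $j\ge i$. We will derive both purely from the separation $\ge 14L_i$ and the bounds $\kappa_*\le\kappa_j\le\kappa^*$. This gives constants independent of $J$, of the chain $\{k_i\}$, and of $\varepsilon$: the factors $\delta$ and $\varepsilon$ enter only through Lemma \ref{lem:tail} and \eqref{eq:key-identities}.
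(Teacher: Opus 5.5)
Your proposal is correct and is essentially the paper's proof. It uses the same split into large cubes, small cubes near $E$, and small cubes with $\operatorname{dist}(Q,E)\ge2\sqrt n\,\ell(Q)$, the same layer-volume bound plus Lemma \ref{lem:tail} for the averages, Lemma \ref{lem:phi-modulus} for the oscillation, and \eqref{eq:key-identities} to combine. The paper's only difference is that it chooses $i$ by $L_{i+1}<3\sqrt n\,\ell(Q)\le L_i$ rather than by $\ell(Q)$. That choice puts all of $Q$ in layers $j\ge i$ and lets the axial projection meet at most one generation-$i$ interval, which avoids the extra comparisons over boundedly many coarser layers that your normalization needs. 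Note that a gap between two distinct generation-$i$ intervals has increment larger than $A_i$; it is only $\lesssim A_i$ with a $\kappa_*$-dependent constant.
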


\begin{proof}
For any cube $Q\subset\mathbb R^n$, we claim that
there exist a generation $i\in\mathbb Z_+$
and a positive constant $C$ such that
\begin{align}
\operatorname{osc}_Q\Phi&\le C A_i,\label{eq:osc-ref}\\
\fint_Qw_k&\le Cw_{k,i}\delta^{-\mathbf1_{\{k=q\}}},
\label{eq:mom-pos}\\
\fint_Qw_k^{-\frac{1}{p-1}}
&\le Cw_{k,i}^{-\frac{1}{p-1}}\varepsilon^{-\mathbf1_{\{k=0\}}}.
\label{eq:mom-neg}
\end{align}
Multiplying the three estimates and using \eqref{eq:key-identities}
together with $\varepsilon^{p-1}=\delta$ gives
\eqref{eq:moment-oscillation}.
Next, we prove this claim by considering three cases for $Q$.

\emph{Case (1)} $\ell(Q)>(3\sqrt n)^{-1}L_0$. Take $i=0$.
On $\{x\in\mathbb R^n:\ \rho(x)>L_0\}$, $w_k=w_{k,0}$.
For any $j\in\mathbb Z_+$, let $\Omega_j:=\{L_{j+1}<\rho\le L_j\}$.
Then $|\Omega_j|\le |\{x\in\mathbb R^n:\ \rho(x)\le L_j\}| \sim V_j$.
By this and Lemma \ref{lem:tail}, we conclude that
\begin{align*}
\int_{\{0<\rho\le L_0\}}w_k
&\lesssim \sum_{j=0}^\infty V_jw_{k,j}
\sim L_0^nw_{k,0}\delta^{-\mathbf1_{\{k=q\}}},\\
\int_{\{0<\rho\le L_0\}}w_k^{-\frac{1}{p-1}}
&\lesssim \sum_{j=0}^\infty V_jw_{k,j}^{-\frac{1}{p-1}}
\sim L_0^nw_{k,0}^{-\frac{1}{p-1}}
\varepsilon^{-\mathbf1_{\{k=0\}}}.
\end{align*}
Since $|Q|\ge(3\sqrt n)^{-n}L_0^n$, dividing by $|Q|$ and adding the
contribution of the constant region gives
\eqref{eq:mom-pos} and \eqref{eq:mom-neg}.
Moreover, $0\le\Phi\le A_0$, so \eqref{eq:osc-ref} holds as well.

\emph{Case (2)} $\ell(Q)\le(3\sqrt n)^{-1}L_0$ and
$\operatorname{dist}(Q,E)<2\sqrt n\ell(Q)$.
Take the unique $i\ge0$ such that
\begin{equation*}
L_{i+1}<R:=3\sqrt n\ell(Q)\le L_i.
\end{equation*}
Then, for any $x\in Q$, $\rho(x)<R\le L_i$.
Thus, $Q\cap\Omega_j\ne\emptyset$ implies $j\ge i$.
For these $j$, we have
\begin{equation}\label{eq:local-layer-volume}
|Q\cap\Omega_j|\lesssim 2^{j-i}L_j^n.
\end{equation}
Indeed, if $x\in Q$ and $\rho(x)\le L_j$, then the axial coordinate of
its nearest point in $E$ lies in the interval obtained by enlarging the
axial projection of $Q$ by $L_j$ on either side. That interval has
length at most
$\operatorname{diam}(Q)+2L_j\le\tfrac{7L_i}3$, and therefore meets at most
one interval of generation $i$, since distinct intervals of generation
$i$ are at least $14L_i$ apart (and there is only one when $i=0$).
This interval has only $2^{j-i}$ descendants of generation $j$, and the
$L_j$-neighborhood of each descendant is contained in a ball of radius
$2L_j$. This proves \eqref{eq:local-layer-volume}.

Using \eqref{eq:local-layer-volume} and Lemma \ref{lem:tail}, we obtain
\begin{align*}
\int_Qw_k&
\lesssim \sum_{j=i}^\infty 2^{j-i}L_j^n w_{k,j}
\sim L_i^nw_{k,i}\delta^{-\mathbf1_{\{k=q\}}},\\
\int_Qw_k^{-\frac{1}{p-1}}
&\lesssim \sum_{j=i}^\infty 2^{j-i}L_j^n w_{k,j}^{-\frac{1}{p-1}}
\sim L_i^nw_{k,i}^{-\frac{1}{p-1}} \varepsilon^{-\mathbf1_{\{k=0\}}}.
\end{align*}
This, together with $\ell(Q)\sim L_i$, further implies
\eqref{eq:mom-pos} and \eqref{eq:mom-neg}.
For $x,y\in Q$, the projections $s=\langle x,\vartheta\rangle$ and
$t=\langle y,\vartheta\rangle$ satisfy
$r(s),r(t)\le R$ and $|s-t|\le R$.
Thus, the oscillation estimate \eqref{eq:osc-ref} follows from Lemma \ref{lem:phi-modulus}.

\emph{Case (3)} $\ell(Q)\le(3\sqrt n)^{-1}L_0$ and
$R:=\operatorname{dist}(Q,E)\ge2\sqrt n\ell(Q)$.
In this case, for any $x\in Q$,
\begin{equation*}
R
\le \rho(x)
\le \operatorname{dist}(Q,E) + \operatorname{diam}(Q)
\le \tfrac32R.
\end{equation*}
If $R>L_0$, take $i=0$: the weights are constant on $Q$ and the
oscillation is at most $A_0$.
If $R\le L_0$, take $L_{i+1}<R\le L_i$.
For $i\ge1$ we have $\tfrac{3L_i}2<L_{i-1}$, so $Q$ can meet only layers $i$
and $i-1$; for $i=0$ only layer $0$ and the constant region are
involved.
By \eqref{eq:wk-expand}, consecutive weight values satisfy
\begin{equation*}
\frac{w_{k,j+1}}{w_{k,j}}
=(2\kappa_{j+1}^n)^{p(1-\frac{k}{q})-1}
e^{(p-1)\varepsilon(1-\frac{k}{q})-\frac{k}{q}\delta}.
\end{equation*}
Both this ratio and its reciprocal are uniformly bounded,
with bounds depending only on $p$, $n$, and $\kappa_*$.
Therefore, $w_k\sim w_{k,i}$ on $Q$, which yields \eqref{eq:mom-pos} and \eqref{eq:mom-neg}.
For the projections of $x,y\in Q$ we have
$r(s)\le\rho(x)\le\frac32R$, $r(t)\le\rho(y)\le\frac32R$, and
$|s-t|\le\operatorname{diam}(Q)\le\frac12R<R$, and applying
Lemma \ref{lem:phi-modulus} again gives the oscillation estimate \eqref{eq:osc-ref}.
This completes the proof of the claim and hence Lemma \ref{lem:all-cubes}.
\end{proof}

\section{The matrix weight and its characteristic}
\label{sec:matrix weight}

Let $e_0,\ldots,e_q$ be the standard basis of $\mathbb C^{q+1}$,
where $e_k$ is the vector whose $(k+1)$-st coordinate is $1$ and all other coordinates are $0$.
Let $\mathsf J_q$ be the nilpotent shift matrix defined by
$\mathsf J_qe_k=e_{k+1}$ ($k\in\{0,1,\ldots,q-1\}$) and $\mathsf J_qe_q=0$.
Then $\mathsf J_q^{q+1}$ is the zero matrix and, for any $t\in \mathbb R$,
\begin{equation*}
e^{t\mathsf J_q}
=\sum_{i=0}^q \frac{t^i}{i!}\mathsf J_q^i.
\end{equation*}

\begin{proposition}\label{prop:matrix-weight}
Let $p\in[2,\infty)$ and $\{w_k\}_{k=0}^q$ be as in \eqref{eq:outside}.
Let
\begin{equation*}
D:=\operatorname{diag}(w_0^{\frac1p},\ldots,w_q^{\frac1p}),\quad
L:=De^{-\Phi\mathsf J_q},\quad\text{and}\quad
W:=(L^*L)^{\frac p2}.
\end{equation*}
Then $W\in \mathcal A_p$ and
\begin{equation}\label{eq:characteristics}
[W]_{\mathcal A_p}\sim\delta^{-1}=\varepsilon^{1-p},
\end{equation}
where the positive equivalence constants are independent of $\varepsilon$.
\end{proposition}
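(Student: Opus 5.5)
The plan is to reduce the matrix characteristic to the scalar quantities already controlled in Lemma~\ref{lem:all-cubes}, by computing $\|W^{\frac1p}(x)W^{-\frac1p}(y)\|$ explicitly through the shear structure.

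\textbf{Reduction to matrix entries.} Since $W^{\frac2p}=L^*L$, we have $|W^{\frac1p}(x)\vec v|=|L(x)\vec v|$ for every vector $\vec v$, and similarly for the inverses. Hence
\begin{equation*}
\|W^{\frac1p}(x)W^{-\frac1p}(y)\|=\|L(x)L(y)^{-1}\|.
\end{equation*}
Because $\mathsf J_q$ commutes with itself, we can write
\begin{equation*}
L(x)L(y)^{-1}=D(x)e^{-[\Phi(x)-\Phi(y)]\mathsf J_q}D(y)^{-1}.
\end{equation*}
This is a lower triangular $(q+1)\times(q+1)$ matrix. For $0\le l\le k\le q$, its $(k,l)$ entry is
\begin{equation*}
w_k(x)^{\frac1p}\,\frac{[\Phi(y)-\Phi(x)]^{k-l}}{(k-l)!}\,w_l(y)^{-\frac1p}.
\end{equation*}
The dimension is at most $3$, so the operator norm is comparable to the sum of the absolute values of these entries. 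Using $(\sum_i a_i)^{s}\sim\sum_i a_i^{s}$ for finitely many terms, we obtain, for each cube $Q$, that the quantity inside the supremum in \eqref{eq:Ap} is comparable to
\begin{equation*}
\sum_{0\le l\le k\le q}\fint_Q w_k(x)\left[\fint_Q|\Phi(x)-\Phi(y)|^{(k-l)p'}w_l(y)^{-\frac{1}{p-1}}\,dy\right]^{p-1}dx .
\end{equation*}

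\textbf{Upper bound.} Bound $|\Phi(x)-\Phi(y)|$ by $\operatorname{osc}_Q\Phi$. Each term is then exactly the left side of \eqref{eq:moment-oscillation}, so Lemma~\ref{lem:all-cubes} bounds it by $\delta^{\frac{k-l}{q}-\mathbf 1_{\{k=q\}}-\mathbf 1_{\{l=0\}}}$. We check the exponent case by case:
\begin{enumerate}[{\rm(a)}]
\item if $(k,l)=(q,0)$, it equals $1-1-1=-1$;
\item if $k=q$ and $l>0$, it equals $\frac{q-l}{q}-1\ge-1$;
\item if $l=0$ and $k<q$, it equals $\frac kq-1\ge-1$;
\item otherwise it is non-negative.
\end{enumerate}
Since $\delta<1$, every term is at most a constant times $\delta^{-1}$. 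The constants are uniform in $Q$, so $[W]_{\mathcal A_p}\lesssim\delta^{-1}$.

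\textbf{Lower bound.} It suffices to exhibit one cube and keep only the $(q,0)$ entry. Take $Q$ to be a cube of side comparable to $L_0$ containing the $L_0$-neighborhood of $[0,L_0]\vartheta$. Let $I_-,I_+\in\mathcal I_1$ be the two children of $[0,L_0]$. Restrict $x$ to the plateau tubes $H_{I'}$, $j\ge1$, with $I'\subset I_-$, and restrict $y$ to the analogous tubes with $I'\subset I_+$.
\begin{enumerate}[{\rm(a)}]
\item \emph{Separation of $\Phi$.} By \eqref{eq:Phi-interval-osc} and monotonicity, $\Phi(x)\le A_1$ and $\Phi(y)\ge A_0-A_1$. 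Hence $|\Phi(x)-\Phi(y)|\ge A_0-2A_1=\Delta_0\gtrsim A_0$, using \eqref{eq:A-decay}.
\item \emph{Scalar integrals.} By Lemma~\ref{lem:plateau-distance}, $w_k=w_{k,j}$ on $H_j$. By \eqref{eq:plateau-volume}, half of the plateaus in $H_j$ have total measure $\sim V_j$. Summing over $j\ge1$ and using the explicit forms $V_jw_{q,j}=\delta e^{-\delta j}$ and $V_jw_{0,j}^{-\frac1{p-1}}=e^{-\varepsilon j}$ from \eqref{eq:key-identities}, we get
\begin{equation*}
\int_{x\text{-set}}w_q\gtrsim\sum_{j\ge1}\delta e^{-\delta j}\sim1=\delta^{-1}V_0w_{q,0},
\qquad
\int_{y\text{-set}}w_0^{-\frac1{p-1}}\gtrsim\varepsilon^{-1}V_0w_{0,0}^{-\frac1{p-1}},
\end{equation*}
where the shift from $j\ge0$ to $j\ge1$ costs only constants depending on $\kappa_*$.
\end{enumerate}
Combining these with $|Q|\sim V_0$, the $(q,0)$ term is bounded below by
\begin{equation*}
A_0^{qp}\,\delta^{-1}w_{q,0}\,\varepsilon^{-(p-1)}w_{0,0}^{-1}=\delta^{-2}A_0^{qp}\frac{w_{q,0}}{w_{0,0}}=\delta^{-1},
\end{equation*}
where the last equality uses \eqref{eq:key-identities} with $(k,l)=(q,0)$.

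\textbf{Main obstacle.} The upper bound is routine once Lemma~\ref{lem:all-cubes} is available. The delicate part is the lower bound. We must make sure that the full $\delta^{-1}$ and $\varepsilon^{-1}$ tail masses are captured simultaneously on sets where $\Phi$ is separated by $\gtrsim A_0$. Splitting into the two children of the root interval achieves this, and the constants remain independent of $\varepsilon$.
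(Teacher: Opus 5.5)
Your proposal is correct. The upper bound is the same as the paper's: you reduce to $\|L(x)L(y)^{-1}\|$, estimate entrywise, apply Lemma~\ref{lem:all-cubes}, and check that the worst exponent is $-1$, attained at $(k,l)=(q,0)$.

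The lower bound takes a genuinely different route. You keep the corner entry $(q,0)$ and restrict $x$ and $y$ to plateau tubes above the two different children of $[0,L_0]$, so that $|\Phi(x)-\Phi(y)|\ge\Delta_0\gtrsim A_0$. The two tail masses, $\delta^{-1}$ from $w_q$ and $\varepsilon^{-(p-1)}=\delta^{-1}$ from $w_0^{-\frac1{p-1}}$, then combine with $A_0^{qp}w_{q,0}/w_{0,0}=\delta$ from \eqref{eq:key-identities} to give $\delta^{-1}$.

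The paper instead keeps only the $(0,0)$ entry on $Q_0=[-3L_0,3L_0]^n$. The exterior region $Q_0\setminus B(\mathbf 0,2L_0)$, where $w_0\equiv w_{0,0}$, gives $\fint_{Q_0}w_0\gtrsim w_{0,0}$. The tubes give $\fint_{Q_0}w_0^{-\frac1{p-1}}\gtrsim\varepsilon^{-1}w_{0,0}^{-\frac1{p-1}}$. So the scalar weight $w_0$ alone already has characteristic $\gtrsim\varepsilon^{-(p-1)}=\delta^{-1}$.

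The paper's argument is shorter and uses no information about $\Phi$. Yours needs the separation of $\Phi$ across the root's children and the exact identity $w_{q,0}/w_{0,0}=\delta A_0^{-qp}$. In exchange, it shows more: the shear-coupling term, case (a) of your exponent count, is itself saturated, so the estimate of Lemma~\ref{lem:all-cubes} is sharp in its extremal case. It also never uses the values of the weights off the $L_0$-neighborhood of $E$.
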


\begin{proof}
Since $W^{\frac2p}=L^*L$, we have
$|W^{\frac1p}\vec z|=|L\vec z|$ for all $\vec z\in\mathbb C^{q+1}$, and hence
\begin{equation*}
\left\|W^{\frac1p}(x)W^{-\frac1p}(y)\right\|
=\left\|L(x)L^{-1}(y)\right\|.
\end{equation*}
Note that $L(x)L^{-1}(y)=D(x)e^{[\Phi(y)-\Phi(x)]\mathsf J_q}D(y)^{-1}$,
whose nonzero entries are
\begin{equation*}
[L(x)L^{-1}(y)]_{k,l}
=[w_k(x)]^{\frac1p}[w_l(y)]^{-\frac1p}
\frac{[\Phi(y)-\Phi(x)]^{k-l}}{(k-l)!},
\qquad 0\le l\le k\le q.
\end{equation*}
From this, the equivalence of norms on finite-dimensional spaces,
and Lemma \ref{lem:all-cubes}, we deduce that,
for any cube $Q\subset\mathbb R^n$,
\begin{align*}
&\fint_Q\left[\fint_Q
\left\|W^{\frac1p}(x)W^{-\frac1p}(y)\right\|^{p'}\,dy
\right]^{\frac{p}{p'}}dx\\
&\quad\lesssim \sum_{0\le l\le k\le q}
(\operatorname{osc}_Q\Phi)^{(k-l)p}
\left(\fint_Qw_k\right)
\left(\fint_Qw_l^{-\frac{1}{p-1}}\right)^{p-1}\\
&\quad\lesssim \sum_{0\le l\le k\le q}
\delta^{\frac{k-l}{q}-\mathbf1_{\{k=q\}}-\mathbf1_{\{l=0\}}}
\sim \delta^{-1}.
\end{align*}
Taking the supremum over all cube $Q\subset\mathbb R^n$,
we obtain $[W]_{\mathcal A_p}\lesssim \delta^{-1}$.

It remains to prove the reverse estimate. Take
$Q_0=[-3L_0,3L_0]^n$.
All tubes lie in $B(\mathbf 0,\tfrac{11L_0}{10})\subset Q_0$,
$$
Q_0\setminus B(\mathbf 0,2L_0)\subset\{x\in\mathbb R^n:\ \rho(x)>L_0\},
\quad\text{and}\quad
|Q_0\setminus B(\mathbf 0,2L_0)|\sim |Q_0|.
$$
Thus,
\begin{equation*}
\fint_{Q_0}w_0\gtrsim w_{0,0}.
\end{equation*}
Since the tubes are pairwise disjoint,
Lemmas \ref{lem:plateau-distance} and \eqref{lem:tail} imply that
\begin{equation*}
\fint_{Q_0}w_0^{-\frac{1}{p-1}}
\ge\frac{1}{|Q_0|}\sum_{j=0}^\infty |H_j|w_{0,j}^{-\frac{1}{p-1}}
\sim L_0^{-n} \sum_{j=0}^\infty 2^j L_j^n w_{0,j}^{-\frac{1}{p-1}}
\sim w_{0,0}^{-\frac{1}{p-1}}\varepsilon^{-1}.
\end{equation*}
Keeping only the $(0,0)$ entry of $L(x)L^{-1}(y)$ yields
\begin{equation*}
[W]_{\mathcal A_p}
\ge \fint_{Q_0}w_0
\left(\fint_{Q_0}w_0^{-\frac{1}{p-1}}\right) ^{p-1}
\gtrsim \varepsilon^{-(p-1)}=\delta^{-1}.
\end{equation*}
Together with the upper bound, this proves \eqref{eq:characteristics}.
\end{proof}

\section{The test function and the operator norm lower bound}
\label{sec:test function}

We continue to assume $p\ge2$.
Recall that $q=1$ if $\tau=-$ and $q=2$ if $\tau=+$.
Let
$$
\mathcal Rf(\cdot):=f(-\cdot),\quad
T_+:=\frac{T+\mathcal RT\mathcal R}2,\quad\text{and}\quad
T_-:=\frac{T-\mathcal RT\mathcal R}2.
$$
In this section we prove the lower bound for the operator $T_\tau$,
whose off-diagonal kernel $K_\tau$ satisfies $K_\tau(-\cdot)=(-1)^qK_\tau(\cdot)$.

\begin{proposition}\label{prop:norm-ratio}
Let $m\ge q+1$.
Then there exists a positive constant $C$ such that, for any $t\ge 1$,
\begin{equation*}
\sup_{[W]_{\mathcal A_p}\le t}
\left\| T_\tau\right\|_{L^p(W)\to L^p(W)}
\ge C t^{1+\frac{1}{p(p-1)}}.
\end{equation*}
\end{proposition}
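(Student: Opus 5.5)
Given $t\ge1$, I would pick $\varepsilon\in(0,\frac1{10})$ with $\delta=\varepsilon^{p-1}\sim t^{-1}$. I would also choose the chain length $J\sim\varepsilon^{-1}$. Then I take the weight $W$ of Proposition \ref{prop:matrix-weight}, embedded in $\mathbb C^m$ by putting a scalar weight $1$ on the extra $m-q-1$ coordinates. This embedding does not change the characteristic up to constants, so $[W]_{\mathcal A_p}\lesssim t$. The target then becomes
\[
\|T_\tau\|_{L^p(W)\to L^p(W)}\gtrsim \delta^{-1-\frac1{p(p-1)}}=\varepsilon^{-(p-1)-\frac1p}.
\]
Small $t$ is handled trivially, since $T_\tau$ is bounded below on unweighted $L^p$ by non-degeneracy.

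The core computation is the commutator identity. I set $\vec g:=L^{-1}\vec h=e^{\Phi\mathsf J_q}D^{-1}\vec h$, so $\|\vec g\|_{L^p(W)}=\|\vec h\|_{L^p}$, and look at
\[
L T_\tau L^{-1}\vec h(x)=\int K_\tau(x-y)D(x)e^{[\Phi(y)-\Phi(x)]\mathsf J_q}D(y)^{-1}\vec h(y)\,dy .
\]
I take $\vec h=h\,e_0$, which is the natural shear test. Then the $e_q$ coordinate of the output is
\[
\frac1{q!}\,w_q(x)^{1/p}\int K_\tau(x-y)[\Phi(y)-\Phi(x)]^q w_0(y)^{-1/p}h(y)\,dy .
\]
I choose $h:=w_0^{-1/(p(p-1))}\mathbf 1_{\bigcup_{j\le J}H_j}$, which makes $|h|^p=w_0^{-1/(p-1)}$ on the tubes. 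By Lemma \ref{lem:tail}, $\|h\|_p^p\sim\sum_{j\le J}V_jw_{0,j}^{-1/(p-1)}=\sum_{j\le J}e^{-\varepsilon j}\sim\varepsilon^{-1}$. I evaluate the output for $x$ in a tube $H_{I'}$, where $I'$ is a generation $j+1$ child of some $I\in\mathcal I_j$, and I split the $y$-integral over tubes $H_{I''}$ into four kinds: the sibling tube (the other child's gap plateau at the same generation, displaced by about $L_j\vartheta$ up to $O(L_{j+1})$), ancestral tubes, descendant tubes, and the local tube.

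For the sibling term, $x-y$ lies within $O(\kappa^*L_j)$ of $\pm L_j\vartheta$. By \eqref{eq:sample}, the Hölder condition and \eqref{eq:A-choice}, $\operatorname{Re}\lambda K_\tau(x-y)\ge\frac{a_0}{2}L_j^{-n}$. The parity $K_\tau(-z)=(-1)^qK_\tau(z)$ combined with the sign of $[\Phi(y)-\Phi(x)]^q$ makes both orientations contribute with the same sign. This is exactly why $q$ is matched to $\tau$, and $|\Phi(y)-\Phi(x)|\sim A_j$ here, since the two plateaus differ by about $\Delta_j\sim A_j$ by \eqref{eq:A-decay}. So the sibling term has size
\[
w_{q,j+1}^{1/p}L_j^{-n}A_j^q\,L_{j+1}^n\,w_{0,j+1}^{-1/p-1/(p(p-1))}.
\]
Using \eqref{eq:key-identities} this simplifies to $\gtrsim\delta^{1/p}\,w_{0,j}^{-1/(p(p-1))}$-type quantities at scale $j$. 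For the other terms I would bound the ancestral and descendant tubes using the size condition and Lemma \ref{lem:phi-modulus}: their $\Phi$-differences are at most $A_i$ with geometrically decaying geometry. I expect these to form a convergent series dominated by a small multiple of the sibling term, thanks to the smallness of $\kappa^*$ and the Hölder gain from \eqref{eq:A-choice}. Since $\Phi$ is constant on the local tube, the factor $[\Phi(y)-\Phi(x)]^q$ vanishes there and the local term is zero.

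Raising to the $p$-th power and summing over $x\in H_{j+1}$ for $j<J$ gives
\[
\|LT_\tau L^{-1}\vec h\|_p^p\gtrsim\sum_{j<J}V_{j+1}w_{q,j+1}\,A_j^{qp}\,w_{0,j+1}^{-p'}\cdots .
\]
Via \eqref{eq:key-identities} I expect this to reduce to $\sum_j e^{-\delta j}\cdot(\text{const})$ times the right power of $\delta$. The ratio against $\|h\|_p^p\sim\varepsilon^{-1}$ should then produce $\delta^{-p-\frac1{p-1}}$ after the choice $J\sim\delta^{-1}$. Because the exponents are delicate, I would double-check that the truncation should be $J\sim\delta^{-1}$ rather than $\varepsilon^{-1}$, and choose $h$'s support accordingly.

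The main obstacle is making the non-sibling contributions uniformly smaller than the sibling term, independently of $\varepsilon$ and $J$. The descendant sum involves $2^{i-j}$ tubes with weights $w_{0,i}^{-1/(p(p-1))}$ that grow. My first attempt is to pair terms via the parity and then apply the Hölder condition \eqref{eq:holder}: replace $K_\tau(x-y)$ by $K_\tau(x-c)$ for a center $c$ of the descendant cluster, use the cancellation of $[\Phi(y)-\Phi(x)]^q$ against its mean, and absorb the remainder into $\frac{a_0}{2}$ using $H_K(4\kappa^*)^\alpha\le\frac{a_0}{2}$.
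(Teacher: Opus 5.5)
There is a genuine gap: you have misidentified the main term, and your own computation already shows it. The adjacent-sibling contribution at $x\in H_{j+1}$ is of order $\delta^{1/p}w_{0,j}^{-1/(p(p-1))}\sim\delta^{1/p}h$. By itself it yields only $\|LT_\tau L^{-1}\vec h\|_{L^p}^p\gtrsim\delta\|h\|_{L^p}^p$, i.e.\ a norm lower bound of order $\delta^{1/p}\ll1$, and no choice of $J$ repairs this.

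In the paper the growth comes from exactly the terms you plan to treat as errors, several of which your four-way split does not even list. For $x\in H_I$ with $I\in\mathcal I_\ell$, each ancestor $P_i$ ($0\le i<\ell$) contributes through the entire branch $\mathcal B_i(I)$: the tubes of all generations $j>i$ lying in the other child $P_i^{\mathrm{sib}}$. For every such $y$, $x-y$ lies within $4\kappa^*L_i$ of $\pm L_i\vartheta$. Hence \eqref{eq:sample}, \eqref{eq:A-choice} and the parity matching $K_\tau(\sigma z)=\sigma^qK_\tau(z)$ make all $\ell$ branch contributions have the same sign and comparable size. This is why a single color along a chain of $\sim\delta^{-1}$ good scales is needed. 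These terms cannot be dominated by the adjacent sibling term; they are the main term.

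Conversely, the H\"older-cancellation scheme you propose for descendants addresses a non-issue. The paper bounds descendants by $O(1)$ and ancestral plateaus by $O(\ell\eta)$ using only the size condition, and absorbs both into the sibling sum $\gtrsim N$ by taking $\varepsilon\le\varepsilon_0$.

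Second, even with the correct accumulation, your test function is wrong for $p>2$. With $u=w_0^{-1/(p-1)}$ on the tubes (your $h=w_0^{-1/(p(p-1))}$), generation $j$ carries $L^p(W)$-mass $\sim e^{-\varepsilon j}$. Branch $i$ then contributes about $L_i^{-n}A_i^q\,2^{-i}\sum_{j>i}e^{-\varepsilon j}\sim\varepsilon^{-1}e^{-(\varepsilon-\delta)i/p}$. For $p>2$ (so that $\varepsilon-\delta\sim\varepsilon$) the whole sibling sum is $O(\varepsilon^{-2})$, and the last-coordinate argument gives only $\varepsilon^{-2+1/p}$. This matches the target $\varepsilon^{-(p-1)-1/p}$ only at $p=2$, where $\eta=\delta=\varepsilon$ and your $u$ coincides with the paper's up to a constant.

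The paper instead takes $u=\eta\sum_{j<M}A_j^{-q}\mathbf 1_{H_j}$ with $N=\lceil\delta^{-1}\rceil$, $M=4N$, and chain length $J=2N$. Then:
\begin{itemize}
\item generation $j$ carries input mass $\eta^pe^{-\delta j}$, so $\|\vec f\|_{L^p(W)}^p\lesssim\varepsilon$ is spread over $\sim\delta^{-1}$ generations;
\item each branch contributes exactly $L_i^{-n}A_i^q2^{-i}e^{-\eta i}=1$, hence $G\gtrsim N$ on generations $N\le\ell<2N$;
\item on those generations $\sum_\ell|H_\ell|w_{q,\ell}\sim1$, so the ratio is $N\varepsilon^{-1/p}\sim\delta^{-1-\frac1{p(p-1)}}$.
\end{itemize}
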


\begin{proof}
We first consider the case $m=q+1$.
Given the parameters in \eqref{eq:params}, set
\begin{equation*}
N:=\lceil\delta^{-1}\rceil,\qquad M:=4N,\qquad J:=2N,
\end{equation*}
take a chain of good scales of length $J+1$ as in Section~2, and build
the corresponding weight $W$ of Proposition
\ref{prop:matrix-weight}.
Define the non-negative scalar function $u$ and the vector-valued test
function $\vec f$ by
\begin{equation*}
u(x):=\eta\sum_{j=0}^{M-1}A_j^{-q}\mathbf1_{H_j}(x),
\qquad
\vec f(x):=e^{\Phi(x)\mathsf J_q}e_0 u(x).
\end{equation*}
The tubes are pairwise disjoint and only finitely many
generations are involved, so $u$ and $\vec f$ are bounded and compactly
supported.
Note that $L\vec f=De_0u$ and $|W^{\frac1p}\vec f|=|L\vec f|$.
Using this and Lemma \ref{lem:plateau-distance}, we obtain
\begin{align}
\|\vec f\|_{L^p(W)}^p
&=\int_{\mathbb R^n}|L(x)\vec f(x)|^p\,dx
=\eta^p\sum_{j=0}^{M-1}|H_j|w_{0,j}A_j^{-qp}\notag\\
&\sim \eta^p\sum_{j=0}^{M-1}V_jw_{0,j}A_j^{-qp}
=\eta^p\sum_{j=0}^{M-1}e^{-\delta j}
\lesssim\frac{\eta^p}{\delta}
\le\varepsilon.
\label{eq:input-size}
\end{align}

Next, we estimate $\|T_\tau\vec f\|_{L^p(W)}$. Since
\begin{equation*}
\|T_\tau\vec f\|_{L^p(W)}^p
=\int_{\mathbb R^n} \left|W^{\frac1p}(x)T_\tau\vec f(x)\right|^p\,dx
=\int_{\mathbb R^n} \left|L(x)T_\tau\vec f(x)\right|^p\,dx,
\end{equation*}
taking the last coordinate of
$L T_\tau\vec f=De^{-\Phi\mathsf J_q}T_\tau\vec f$ gives
\begin{equation}\label{eq:last-coordinate}
\|T_\tau\vec f\|_{L^p(W)}^p
\ge\int_{\mathbb R^n} w_q(x)
\left|\left[e^{-\Phi(x)\mathsf J_q}T_\tau\vec f(x)\right]_q\right|^p\,dx.
\end{equation}

Fix a tube $H_I$ and denote the constant value of $\Phi$ on it
by $c_I$.
The scalar function $(\Phi-c_I)^q\tfrac{u}{q!}$ is bounded, compactly supported,
and vanishes on the open set $H_I$. Hence, inside $H_I$, the operator
$T_\tau$ applied to it is represented by the off-diagonal kernel with an
absolutely convergent integral.
Thus, for almost every $x\in H_I$ we have
\begin{align*}
\left[e^{-\Phi(x)\mathsf J_q}T_\tau\vec f(x)\right]_q
&= \left[ T_\tau\left( e^{[\Phi(\cdot)-\Phi(x)]\mathsf J_q}e_0 u(\cdot) \right) (x)\right]_q \\
&=\frac1{q!}\int_{\mathbb R^n}K_\tau(x-y)
[\Phi(y)-\Phi(x)]^q u(y)\,dy.
\end{align*}
For almost every $x\in H_I$, let
\begin{align*}
G(x)
&:=q!\operatorname{Re}\left\{(-1)^q\lambda
\left[e^{-\Phi(x)\mathsf J_q}T_\tau\vec f(x)\right]_q\right\} \\
&\phantom{:}=\int_{\mathbb R^n}\operatorname{Re}\{\lambda K_\tau(x-y)
[\Phi(x)-\Phi(y)]^q\}u(y)\,dy.
\end{align*}
Then
\begin{equation*}
\left|\left[e^{-\Phi(x)\mathsf J_q}T_\tau\vec f(x)\right]_q\right|
\geq \frac1{q!} G(x),
\end{equation*}
and it remains to bound $G$ from below.

We split the domain of integration into four parts.
Fix $I\in\mathcal I_\ell$ and $x\in H_I$, where
$N\le\ell\le2N-1$. For $0\le i<\ell$, let $P_i\in\mathcal I_i$
be the ancestor of $I$ of generation $i$, and let
$P_i^{\mathrm{sib}}$ be the one of the two children of $P_i$ that does
not contain $I$. Define
\begin{align*}
\mathcal B_i(I):=\bigcup_{j=i+1}^{M-1}
\bigcup_{\substack{Q\in\mathcal I_j\\Q\subset P_i^{\mathrm{sib}}}}H_Q, \quad
\mathcal A(I):=\bigcup_{i=0}^{\ell-1}H_{P_i}, \quad
\mathcal D(I):=\bigcup_{j=\ell+1}^{M-1}
\bigcup_{\substack{Q\in\mathcal I_j\\Q\subset I}}H_Q.
\end{align*}
Every tube belongs to exactly one of the following: a
sibling branch $\mathcal B_i(I)$, an ancestral tube in $\mathcal A(I)$,
the tube $H_I$ itself, or a descendant branch in $\mathcal D(I)$. This
gives the disjoint decomposition
\begin{equation*}
\{x\in\mathbb R^n:\ u(x)>0\}
=\left(\bigcup_{i=0}^{\ell-1}\mathcal B_i(I)\right)
\cup\mathcal A(I)\cup\mathcal D(I)\cup H_I.
\end{equation*}

We first treat the sibling branches.
Let $y\in\mathcal B_i(I)$. Set $\sigma=1$ if the projection of $x$
lies in the right child of $P_i$, and $\sigma=-1$ otherwise. The two
children abut the endpoints of $P_i$, and the transverse displacements
of $x$ and $y$ are at most $\frac{L_{i+1}}{10}$; hence
\begin{equation*}
|(x-y)-\sigma L_i\vartheta|
\le 2L_{i+1}+\frac{L_{i+1}}{5}
\le \frac{11}{5}\kappa^*L_i
< 4 \kappa^* L_i.
\end{equation*}
Since $4\kappa^*\le\tfrac14$, the H\"older condition
\eqref{eq:holder} of $K_\tau$ implies
\begin{equation} \label{holder}
\left|K_\tau(x-y)-K_\tau(\sigma L_i\vartheta)\right|
\le H_K \frac{(4 \kappa^* L_i)^\alpha}{L_i^{n+\alpha}}
= (4 \kappa^*)^\alpha H_K L_i^{-n}.
\end{equation}

Since $\Phi$ is monotone and its oscillation on each child is
$A_{i+1}$, we have
\begin{equation*}
\sigma [\Phi(x)-\Phi(y)]
\ge A_i-2A_{i+1}
\ge\gamma A_i,
\end{equation*}
where $\gamma:=1-2(\frac{e^{\frac1{10}}}8)^{\frac12}$
and the last inequality follows from \eqref{eq:A-decay}.
On the other hand, using $K_\tau(\sigma z)=\sigma^qK_\tau(z)$,
$|\lambda|=1$, and \eqref{holder}, we obtain
\begin{align*}
\sigma^q\operatorname{Re}\{\lambda K_\tau(x-y)\}
&=\operatorname{Re}\{\lambda\sigma^q K_\tau(x-y)\}\\
&=\operatorname{Re}\{\lambda K_\tau(L_i\vartheta)\}
+\operatorname{Re}\{\lambda\sigma^q
[K_\tau(x-y)-K_\tau(\sigma L_i\vartheta)]\}\\
&\ge \operatorname{Re}\{\lambda K_\tau(L_i\vartheta)\}
- | K_\tau(x-y)-K_\tau(\sigma L_i\vartheta)|\\
&\ge \operatorname{Re}\{\lambda K_\tau(L_i\vartheta)\}
- (4 \kappa^*)^\alpha H_K L_i^{-n}.
\end{align*}
This, together with \eqref{eq:sample} and \eqref{eq:A-choice}, further implies that
\begin{equation*}
\sigma^q\operatorname{Re}\{\lambda K_\tau(x-y)\}
\ge \left[a_0-(4 \kappa^*)^\alpha H_K\right]L_i^{-n}
\ge \frac{a_0}{2} L_i^{-n}.
\end{equation*}
Consequently, for any $0\le i<\ell$ and $y\in\mathcal B_i(I)$,
\begin{equation}\label{eq:good-pair}
\operatorname{Re}\{\lambda K_\tau(x-y)
[\Phi(x)-\Phi(y)]^q\}
=\sigma^q\operatorname{Re}\{\lambda K_\tau(x-y)\}
\sigma^q [\Phi(x)-\Phi(y)]^q
\ge\frac{a_0\gamma^q}{2}L_i^{-n}A_i^q.
\end{equation}
In particular, every sibling branch gives a non-negative contribution.

For $0\le i<N$ and $i+1\le j\le M-1$, exactly
$2^{j-i-1}$ tubes of generation $j$ belong to
$\mathcal B_i(I)$. By \eqref{eq:plateau-volume} and
$A_j^q=2^jL_j^ne^{\eta j}$,
\begin{align*}
\int_{\mathcal B_i(I)}u(y)\,dy
&\gtrsim\eta\sum_{j=i+1}^{M-1}2^{j-i-1}L_j^nA_j^{-q}
=\eta2^{-i-1}\sum_{j=i+1}^{M-1}e^{-\eta j} \\
&=2^{-i-1}e^{-\eta i}
\frac{\eta e^{-\eta}}{1-e^{-\eta}}
\bigl(1-e^{-\eta(M-i-1)}\bigr) \\
&\gtrsim 2^{-i}e^{-\eta i},
\end{align*}
where the last inequality follows from
$1-e^{-\eta}\sim \eta$ for all $0<\eta\le\tfrac1{10}$ and
\begin{equation*}
\eta(M-i-1)\ge3\eta N\ge3\delta N\ge3.
\end{equation*}
Summing \eqref{eq:good-pair} over the sibling branches therefore gives
the total positive contribution
\begin{align*}
&\int_{\bigcup_{i=0}^{\ell-1}\mathcal B_i(I)}
\operatorname{Re}\{\lambda K_\tau(x-y)
[\Phi(x)-\Phi(y)]^q\}u(y)\,dy \\
&\quad\gtrsim \sum_{i=0}^{N-1}
\int_{\mathcal B_i(I)}
L_i^{-n} A_i^q u(y)\,dy
\gtrsim\sum_{i=0}^{N-1}L_i^{-n}A_i^q2^{-i}e^{-\eta i}
=N.
\end{align*}

If $y\in H_{P_i}$, then $y$ lies above the middle of $P_i$ while $x$
lies above one of its children, so
\begin{equation*}
|x-y|
\ge \frac38 (L_i-2L_{i+1})
\ge \frac38 (L_i-2\kappa^*L_i)
\ge \frac{21}{64}L_i,
\end{equation*}
the last inequality using $\kappa^*\le\tfrac1{16}$.
The size condition \eqref{eq:size} then gives
\begin{equation*}
|K_\tau(x-y)|
\le C_K \frac{1}{|x-y|^n}
\leq \left(\frac{64}{21}\right)^n C_K L_i^{-n}.
\end{equation*}
By this and \eqref{eq:Phi-interval-osc}, we conclude that
\begin{align*}
&\left|\int_{\mathcal A(I)}
\operatorname{Re}\{\lambda K_\tau(x-y)
[\Phi(x)-\Phi(y)]^q\}u(y)\,dy\right| \\
&\quad\lesssim \sum_{i=0}^{\ell-1}
\int_{H_{P_i}} L_i^{-n} A_i^q u(y)\,dy
= \sum_{i=0}^{\ell-1} |H_{P_i}| L_i^{-n} \eta
\sim \ell \eta.
\end{align*}

If $y\in\mathcal D(I)$, then similarly
we have $|K_\tau(x-y)|\le(\tfrac{64}{21})^nC_KL_\ell^{-n}$.
Moreover,
\begin{align*}
\int_{\mathcal D(I)}u(y)\,dy
&\sim \eta\sum_{j=\ell+1}^{M-1}2^{j-\ell}L_j^nA_j^{-q}
\leq \eta 2^{-\ell} \sum_{j=\ell+1}^\infty e^{-\eta j} \\
&= \eta 2^{-\ell} \frac{e^{-\eta (\ell+1)}}{1-e^{-\eta}}
\sim 2^{-\ell}e^{-\eta\ell}.
\end{align*}
Using these and \eqref{eq:Phi-interval-osc}, we obtain
\begin{equation*}
\left|\int_{\mathcal D(I)}
\operatorname{Re}\{\lambda K_\tau(x-y)
[\Phi(x)-\Phi(y)]^q\}u(y)\,dy\right|
\lesssim L_\ell^{-n}A_\ell^q2^{-\ell}e^{-\eta\ell}
=1.
\end{equation*}

Finally, for $y\in H_I$ we have $\Phi(y)=\Phi(x)$, so this part of the
integral vanishes.

Collecting the four parts, there exist constants $c_0,C_0>0$ such that
\begin{equation*}
G(x)\ge c_0N-C_0\eta\ell-C_0.
\end{equation*}
We now fix
\begin{equation*}
0<\varepsilon\le\varepsilon_0
:=\min\left\{\frac1{10},\frac{c_0}{8C_0},
\left(\frac{c_0}{4C_0}\right)^{\frac1{p-1}}\right\}.
\end{equation*}
This, together with $\ell<2N$, $\eta\le\varepsilon$,
and $N\ge\delta^{-1}=\varepsilon^{1-p}$, further implies that
$C_0\eta\ell\le 2C_0\varepsilon N\le \tfrac{c_0N}4$
and $C_0\le \tfrac{c_0N}4$.
Thus, for almost every $x\in \bigcup_{\ell=N}^{2N-1}H_\ell$,
\begin{equation*}
\left|\left[e^{-\Phi(x)\mathsf J_q}T_\tau\vec f(x)\right]_q\right|
\ge\frac1{q!}G(x)\ge\frac{c_0}{2q!}N.
\end{equation*}
Inserting this into \eqref{eq:last-coordinate} gives
\begin{align*}
\|T_\tau\vec f\|_{L^p(W)}^p
&\gtrsim N^p \int_{\bigcup_{\ell=N}^{2N-1}H_\ell} w_q(x) \,dx
= N^p \sum_{\ell=N}^{2N-1} |H_\ell| w_{q,\ell} \\
&\sim N^p \delta \sum_{\ell=N}^{2N-1}e^{-\delta\ell}
\sim N^p \sim \delta^{-p},
\end{align*}
where the last two equivalence follows from $\delta N\sim 1$.
Combining the estimate of $\|T_\tau\vec f\|_{L^p(W)}$ and \eqref{eq:input-size}, we obtain
\begin{equation*}
\|T_\tau\|_{L^p(W)\to L^p(W)}
\ge\frac{\|T_\tau\vec f\|_{L^p(W)}}{\|\vec f\|_{L^p(W)}}
\gtrsim\delta^{-1}\varepsilon^{-\frac1p}
=\bigl(\delta^{-1}\bigr)^{1+\frac1{p(p-1)}},
\end{equation*}
where $[W]_{\mathcal A_p}\sim \delta^{-1}$ [see \eqref{eq:characteristics}].

The extension of the above result to $t\in[1,\infty)$
and $m\ge q+1$ is straightforward; see \cite[Theorem 3.13]{byyz}.
This completes the proof of Proposition \ref{prop:norm-ratio}.
\end{proof}

\section{Proof of Theorem \ref{thm:main}}
\label{sec:main}

\begin{proof}[Proof of Theorem \ref{thm:main}]
Assume first that $p\in[2,\infty)$.
By Proposition \ref{prop:norm-ratio}, we conclude that,
for any $t\in[1,\infty)$,
\begin{equation*}
\sup_{[W]_{\mathcal A_p}\le t}
\| T_\tau\|_{L^p(W)\to L^p(W)}
\gtrsim t^{1+\frac{1}{p(p-1)}},
\end{equation*}
where one may take any $m\ge2$ when $\tau=-$ and any $m\ge3$ when
$\tau=+$.
For any $W\in \mathcal A_p$,
\begin{equation*}
\| T_\tau\|_{L^p(W)\to L^p(W)}
\le\frac12\left(
\| T\|_{L^p(W)\to L^p(W)}
+\| T\|_{L^p(\widetilde{W})\to L^p(\widetilde{W})}\right),
\end{equation*}
where $\widetilde W(\cdot):=W(-\cdot)$.
These, together with $[\widetilde W]_{\mathcal A_p}=[W]_{\mathcal A_p}$,
further imply that, for any $t\in[1,\infty)$,
\begin{equation*}
\sup_{[W]_{\mathcal A_p}\le t}
\left\| T\right\|_{L^p(W)\to L^p(W)}
\gtrsim t^{1+\frac{1}{p(p-1)}}.
\end{equation*}
When the odd part is itself non-degenerate, all colors have $\tau=-$,
and the conclusion holds for all $m\ge2$; in the general case it
holds for all $m\ge3$.

If $p\in(1,2)$, we apply the preceding construction to $p'\in(2,\infty)$ and the
adjoint operator $T^*$.
The adjoint kernel $K^*(\cdot)=\overline{K(-\cdot)}$ satisfies
\eqref{eq:size},
\eqref{eq:holder}, and \eqref{eq:nd}; moreover,
$K^*_-(\cdot)=-\overline{K_-(\cdot)}$,
so the non-degeneracy of the odd part is preserved as well.
It is well-known that $V\in\mathcal A_{p'}$
if and only if $W:=V^{1-p}\in \mathcal A_p$ and
there exists a constant $C\in[1,\infty)$ such that
$$
C^{-1} [V]_{\mathcal A_{p'}}^{p-1}
\leq [W]_{\mathcal A_p}
\leq C [V]_{\mathcal A_{p'}}^{p-1};
$$
see, for instance, \cite[Lemma A.6]{byy}.
For $t\in[C,\infty)$, the conclusion just proven above provides $V\in\mathcal A_{p'}$ with
$[V]_{\mathcal A_{p'}}\le (\frac tC)^{\frac1{p-1}}$ such that
\begin{equation*}
\left\| T^*\right\|_{L^{p'}(V)\to L^{p'}(V)}
\gtrsim t^{\frac1{p-1}[1+\frac{1}{p'(p'-1)}]}
= t^{1+\frac{1}{p(p-1)}}.
\end{equation*}
Then $[W]_{\mathcal A_p}\le t$ and
\begin{equation*}
\| T\|_{L^p(W)\to L^p(W)}
=\| T^*\|_{L^{p'}(V)\to L^{p'}(V)}
\gtrsim t^{1+\frac{1}{p(p-1)}}.
\end{equation*}
For any $t\in[1,C)$,
\begin{equation*}
\sup_{[W]_{\mathcal A_p}\le t} \| T\|_{L^p(W)\to L^p(W)}
\geq \| T\|_{L^p(I_m)\to L^p(I_m)}
\geq \| T\|_{L^p(I_m)\to L^p(I_m)}
C^{-[1+\frac{1}{p(p-1)}]} t^{1+\frac{1}{p(p-1)}}.
\end{equation*}
This completes the proof of Theorem \ref{thm:main}.
\end{proof}

\begin{proof}[Proof of Corollary \ref{cor:homog}]
The kernel $\frac{1}{\pi z}$ of the Hilbert transform on $\mathbb R$
and the kernel $\frac{c_nz_j}{|z|^{n+1}}$ of the $j$-th Riesz transform
on $\mathbb R^n$ ($j\in\{1,\ldots,n\}$, $c_n$ a nonzero normalizing constant)
are smooth odd kernels away from the origin, for which
\eqref{eq:size} and \eqref{eq:holder} are immediate; along the
direction $z=t>0$ in dimension one (or $z=te_j$ with $t>0$ in the Riesz case), the
kernel is a nonzero constant multiple of $t^{-n}$, so the odd part
itself satisfies \eqref{eq:nd}. These operators are bounded on $L^2$,
and Theorem \ref{thm:main} applies with $m=2$.
\end{proof}

\noindent\textbf{Acknowledgements}\quad
The authors acknowledge the use of AI tools during the exploratory stage of this project.
All mathematical arguments and proofs in the final manuscript
were checked and written by the authors.

\bigskip

\noindent Fan Bu

\medskip

\noindent Department of Mathematics, Faculty of Arts and Sciences,
Beijing Normal University, Zhuhai 519087, The People's Republic of China

\smallskip

\noindent{\it E-mail:} \texttt{fanbu@bnu.edu.cn}

\bigskip

\noindent Dachun Yang (Corresponding author) and Wen Yuan

\medskip

\noindent Laboratory of Mathematics and Complex Systems (Ministry of Education of China),
School of Mathematical Sciences,  Institute for Advanced Study,
Beijing Normal University, Beijing 100875, The People's Republic of China

\smallskip

\noindent{\it E-mails:} \texttt{dcyang@bnu.edu.cn} (D. Yang)

\noindent\phantom{{\it E-mails:}} \texttt{wenyuan@bnu.edu.cn} (W. Yuan)

\end{document}